\documentclass[12pt,reqno]{amsart}
\usepackage{tikz,dsfont}
\usepackage{amsmath, amssymb, graphicx, mathrsfs, hyperref}
\usepackage{pdfpages}
\usepackage{verbatim} 
\usepackage{enumerate}
\setlength{\textheight}{220mm} \setlength{\textwidth}{155mm}
\setlength{\oddsidemargin}{1.25mm}
\setlength{\evensidemargin}{1.25mm} \setlength{\topmargin}{0mm}

\linespread{1}

\newtheoremstyle{dotless}{}{}{\itshape}{}{\bfseries}{}{ }{} 
\theoremstyle{dotless}

\newtheorem{thm}{Theorem}
\newtheorem{lemma}{Lemma}

\newtheorem{prop}{Proposition}

\newtheorem{corollary}{Corollary}

\begin{document} 

\title{The multiplication table constant and sums of two squares}

\author{Andrew Granville}
\address{D{\'e}partment  de Math{\'e}matiques et Statistique,   Universit{\'e} de Montr{\'e}al, CP 6128 succ Centre-Ville, Montr{\'e}al, QC  H3C 3J7, Canada.}
   \email{andrew.granville@umontreal.ca}  
\author{Cihan Sabuncu}
\address{D{\'e}partment  de Math{\'e}matiques et Statistique,   Universit{\'e} de Montr{\'e}al, CP 6128 succ Centre-Ville, Montr{\'e}al, QC  H3C 3J7, Canada.}
   \email{cihan.sabuncu@umontreal.ca}  
   \author{Alisa Sedunova}
\address{Centre de Recherches Math{\'e}matiques et Statistique,   Universit{\'e} de Montr{\'e}al, CP 6128 succ Centre-Ville, Montr{\'e}al, QC  H3C 3J7, Canada.}
   \email{alisa.sedunova@gmail.com}  
\thanks{The authors are partially supported by a Discovery Grant  from NSERC of Canada. We thank Dimitris Koukoulopoulos and Carl Pomerance for several useful references.}
\dedicatory{Dedicated to Henryk Iwaniec on his 75th birthday}
 
\subjclass[2020]{Primary }

\begin{abstract}  
We will show that the number of integers $\leq x$ that can be written as the square of an integer plus the square of a prime equals $\frac{\pi}{2} \cdot \frac {x}{\log x}$ minus a secondary term of size $x/(\log x)^{ 1+\delta+o(1)}$, where $\delta := 1 - \frac{1+\log\log 2}{\log 2} = 0.0860713320\dots$ is the multiplication table constant. Detailed heuristics  suggest that this secondary term is asymptotic to
 \[
 \frac{1 }{\sqrt{\log\log x}}   \cdot \frac x{(\log x)^{ 1+\delta}} 
 \]
 times a bounded, positive, $1$-periodic, non-constant function of   $\frac{\log\log x}{\log 2}$.
 \end{abstract}

\maketitle

\section{Introduction}
Erd\H os \cite{Er1} showed that there are $o(N^2)$ distinct integers in the $N$-by-$N$ multiplication table with a proof from ``the book''. The best result to date, due to Ford \cite{Fo}, shows that the number of distinct integers is
\[
\asymp \frac{N^2}{(\log N)^{1+\delta}(\log\log N)^{3/2}} ,
\] 
where the \emph{multiplication table constant}  $\delta := 1 - \frac{1+\log\log 2}{\log 2} = 0.0860713320\dots$. The  constant $\delta$ first appears in \cite{Er2}, and has appeared in closely related problems such as in \cite{Ko, Me, Te} as well as some further afield \cite{BP, CP, LP, NP}. In this paper we find that it is important in another seemingly unrelated problem.

Let $N_0(x)$ be the number of integers $n\leq x$ that can be written as 
\[
n=a^2+p^2 \text{ where } p \text{ is a prime and } a \text{ is a positive integer}.
\]

 \begin{thm} \label{thm: newmain}  We have
 \[
  N_0(x) =  \frac{\pi}{2} \cdot \frac {x}{\log x} - \frac {x(\log\log x)^{O(1)}}{(\log x)^{ 1+\delta}}.
 \]
 That is, in estimating  $N_0(x)$  there is a secondary term of size $x/(\log x)^{ 1+\delta+o(1)}$.
 \end{thm}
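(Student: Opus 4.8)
The plan is to count representations with multiplicity and then remove the overcount. Write $r(n)$ for the number of pairs $(a,p)$ with $a\ge 1$, $p$ prime and $n=a^2+p^2$. Since $\mathbf 1_{r(n)\ge 1}=r(n)-(r(n)-1)_+$ for every $n$, we obtain $N_0(x)=\sum_{n\le x}r(n)-D(x)$, where $D(x):=\sum_{n\le x}(r(n)-1)_+$ records the overcount. The first sum is the number of lattice points with $a^2+p^2\le x$; writing it as $\sum_{p\le\sqrt x}\big(\sqrt{x-p^2}+O(1)\big)$ and evaluating by partial summation against $\pi(t)$ together with the substitution $p=\sqrt x\sin\theta$ gives $\sum_{n\le x}r(n)=\frac{\pi}{2}\frac{x}{\log x}+O\!\big(x/(\log x)^2\big)$, where the classical zero-free region controls the prime-counting error. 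Since $1+\delta<2$ this total is smaller than the claimed secondary term, so the whole theorem reduces to establishing $D(x)\asymp x(\log x)^{-1-\delta}(\log\log x)^{O(1)}$.

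The model that pins down the exponent is as follows. If $n$ is a sum of two squares it has $R(n)\asymp 2^{\omega_1(n)}$ representations $n=u^2+v^2$ with $u,v\ge 1$, where $\omega_1(n)$ counts the prime factors $\equiv 1\bmod 4$; each such $v$ is an essentially random integer of size up to $\sqrt x$, hence prime with ``probability'' $\theta\asymp 1/\log x$. Treating $r(n)$ as the binomial thinning $\mathrm{Bin}(R(n),\theta)$ gives $\mathbb E(r(n)-1)_+\asymp g\big(R(n)\theta\big)$ with $g(\mu)=\mu-1+e^{-\mu}\asymp\min(\mu^2,\mu)$, so that $D(x)\asymp\sum_j A_j\,g(2^j\theta)$ where $A_j:=\#\{n\le x:\omega_1(n)=j,\ n\text{ a sum of two squares}\}$. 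By the Selberg--Delange method $\sum_j A_j y^j\asymp x(\log x)^{y/2-1}$, whence $A_j\asymp\frac{x}{\log x}\frac{(\tfrac12\log\log x)^j}{j!}$. A saddle-point analysis shows the summand is maximized at $j_0:=\frac{\log\log x}{\log 2}$, exactly the point where $2^j\theta$ crosses $1$ (there $2^{j_0}\asymp\log x$); evaluating $A_{j_0}$ by Stirling's formula produces the factor $e^{-\delta\log\log x}=(\log x)^{-\delta}$ precisely because $\frac{1+\log\log 2}{\log 2}=1-\delta$, and one finds $D(x)\asymp A_{j_0}\asymp x(\log x)^{-1-\delta}(\log\log x)^{-1/2}$, matching the theorem.

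For a rigorous upper bound I would split at $j_0$, using $(r-1)_+\le\binom{r}{2}$ for $j\le j_0$ and $(r-1)_+\le r$ for $j>j_0$, to get
\[
D(x)\ \le\ \sum_{j\le j_0}\#\{\text{pairs of prime-representations of some }n\le x\text{ with }\omega_1(n)=j\}\ +\ \sum_{j>j_0}\#\{(a,p):a^2+p^2\le x,\ \omega_1(a^2+p^2)=j\}.
\]
The point of splitting at $j_0$ is that the (too large) second moment $\sum_n\binom{r(n)}{2}\asymp x/\log x$ has all its mass near $j\approx 2\log\log x$, so truncating it at $j_0$ recovers exactly the secondary size, while the first-moment bound is efficient only for $j>j_0$. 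Both pieces are thus instances of understanding the anatomy of the sequence $\{a^2+p^2\}$: one needs a large-deviation upper bound for the number of $1\bmod 4$ prime factors of a sum of a square and a prime square, and its two-variable analogue for pairs. These should follow from upper-bound sieve estimates for multiplicative functions along this sequence (in the spirit of Shiu's and Henriot's bounds) once the local densities of $a^2+p^2$ in residue classes are inserted, and carrying the truncation through the saddle shows each piece is $\ll x(\log x)^{-1-\delta}(\log\log x)^{O(1)}$.

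The hard part, and the main obstacle, is the lower bound. Since $A_{j_0}$ already has the order of the secondary term, it suffices to show that a positive proportion of the $n\le x$ with $\omega_1(n)=j_0$ carry at least two prime representations. This is a genuine existence-of-primes statement entangled with the multiplicative anatomy of $n$: the coordinates $v$ of the representations of $n$ are the imaginary parts of the Gaussian divisors $\prod_{i\in S}\pi_i\prod_{i\notin S}\overline{\pi_i}$ (equivalently they arise from the $2^{\omega_1(n)}$ square roots of $-1$ modulo $n$ via the Hermite--Serret algorithm), and one must force at least two of these $\asymp\log x$ integers to be prime, uniformly over a thin, atypical class of $n$. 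I expect this to need a lower-bound sieve for the coordinates, an equidistribution input of Bombieri--Vinogradov type for the relevant sequence, and a second-moment argument showing the prime coordinates are spread over many different $n$ rather than concentrating. Decoupling this additive primality condition from the large-deviation multiplicative condition is exactly where the difficulty lies, and is what makes the multiplication table constant appear.
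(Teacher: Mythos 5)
Your proposal follows essentially the same route as the paper: the identity $N_0(x)=\sum_{n\le x} r_1(n)-\sum_{n\le x}(r_1(n)-1)_+$, the lattice-point/prime-number-theorem evaluation of the first sum as $\frac{\pi}{2}\frac{x}{\log x}+O(x(\log\log x)/(\log x)^2)$, the split of the overcount at the cutoff $\omega^*(n)=K=\lfloor \log\log x/\log 2\rfloor$ (where $2^K\asymp\log x$) using $\binom{r}{2}$ below the cutoff and $r$ above it, and a lower-bound sieve together with a third-moment argument to show $N_2(x)\gg x(\log x)^{-1-\delta+o(1)}$. The technical inputs you defer --- the moment bounds restricted to $\omega^*(n)=k$ obtained by sieving the linear forms $au_i+bv_i$, and the Bombieri--Vinogradov-driven lower bound for pairs of prime representations --- are exactly the content of Lemma \ref{lem: binombounds4R1}, Corollary \ref{cor: UsefulUpperbounds} and the proof of Theorem \ref{thm: r1(n)choose 2}.
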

 
 We will conjecture a formula for the secondary term, using a plausible heuristic, which suggests that it is oscillatory:
\[
N_0(x) = \frac {\pi  }{2 } \cdot \frac {x}{\log x} -
\{ 1+o(1)\}  \frac{\psi_0^*(\frac{\log\log x}{\log 2}) }{\sqrt{\log\log x}}   \cdot \frac x{(\log x)^{ 1+\delta}}   
\]
where $\psi_0^*(t)$ is the 1-periodic continuous, non-constant, positive-valued function,  
\[
 \psi_0^*(t) =   \kappa     \sum_{m\in \mathbb Z}  (2^m\beta)^{ -1-\tau}     (e^{-2^m\beta} - 1 +2^m\beta )  \text{ where } \beta=2^{1-t}
 \]
with $\tau= \frac{\log(\frac 1{\log 2})}{\log 2} =0.52876\dots$ and $\kappa \approx 0.29356\dots$ (defined precisely below).
  
 Let $r_1(n)$ be the number of representations of $n$ as $a^2+p^2$ where $p$ is prime and $a>0$ is an integer, and let 
 \[
 N_r(x):=\# \{ n\leq x: r_1(n)=r\} \text{ for each integer } r\geq 1.
 \]
One can use  the prime number theorem to prove that 
$\sum_{n\leq x} r_1(n) \sim   \frac{\pi}{2} \frac{x}{\log x}$ from which
 one can deduce that 
\[
 N_1(x)\sim   \frac{\pi}{2} \frac {x}{\log x} .
  \]
as $\sum_{r\geq 2} N_r(x)=o(x/\log x)$.
Our main result  gives an estimate for $N_2(x)$, which involves  the multiplication table constant, $\delta$:

 \begin{thm} \label{thm: main} For $\delta := 1 - \frac{1+\log\log 2}{\log 2} = 0.0860713320\dots$ we have
 \[
  N_2(x) = \frac {x(\log\log x)^{O(1)}}{(\log x)^{ 1+\delta}}.
 \]
 Moreover
 \[
 N_1(x) =  \frac{\pi}{2}\cdot \frac {x}{\log x} - \frac {x(\log\log x)^{O(1)}}{(\log x)^{ 1+\delta}}
 \]
 (That is, the difference $N_1(x) - \frac{\pi}{2}\cdot \frac {x}{\log x}$ is negative and of the given size.)
 \end{thm}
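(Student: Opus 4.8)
The plan is to pass from the $N_r(x)$ to the moment sums
\[
S_1(x) := \sum_{n\le x} r_1(n) = \sum_{r\ge 1} r\,N_r(x), \qquad S_2(x) := \sum_{n \le x}\binom{r_1(n)}{2} = \sum_{r \ge 2}\binom r2 N_r(x),
\]
since both displayed estimates can be recovered from these together with a bound on the higher $N_r$. The sum $S_1$ is the easy input: writing $S_1(x) = \sum_{p \le \sqrt x}\lfloor \sqrt{x - p^2}\rfloor$ and applying the prime number theorem with its classical error term gives $S_1(x) = \frac\pi2\frac{x}{\log x} + O\!\left(\frac{x\log\log x}{(\log x)^2}\right)$, an error negligible against $x/(\log x)^{1+\delta}$ because $\delta<1$. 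The real work is to show $S_2(x) = x(\log\log x)^{O(1)}/(\log x)^{1+\delta}$ and that $\sum_{r\ge 3} N_r(x)$ has strictly smaller order; granting these, $N_2 = S_2(1+o(1))$, and since $N_1 = S_1 - 2N_2 - 3N_3 - \cdots$ with the tail dominated by $N_2$, I obtain $N_1(x) = \frac\pi2\frac{x}{\log x} - x(\log\log x)^{O(1)}/(\log x)^{1+\delta}$ with a manifestly negative secondary term.

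To estimate $S_2$ I would turn it into a divisor count. Two distinct representations $n = a^2 + p^2 = b^2 + q^2$ necessarily use distinct primes, so I take $p < q$ (forcing $a>b$), and the equation becomes $(q-p)(q+p) = (a-b)(a+b)$: the pair $(a-b,\,a+b)$ is a factorization of $q^2-p^2$ into two factors of equal parity. Conversely each factorization $q^2 - p^2 = d\,e$ with $d<e$ and $d\equiv e\pmod 2$ yields a representation pair with $b=(e-d)/2$, and the size condition $n = b^2 + q^2 \le x$ confines $d=a-b$ to an explicit interval $I(p,q,x)\subseteq(0,\sqrt{q^2-p^2}\,]$ whose lower endpoint has order $(q^2-p^2)/\sqrt{x-q^2}$. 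Since $\binom{r_1(n)}2$ counts unordered pairs of representations of $n$, summing over $n\le x$ gives
\[
S_2(x) = \sum_{\substack{p<q\le\sqrt x \\ p,\,q\ \mathrm{prime}}} \#\{\, d \mid q^2 - p^2 \ :\ d \in I(p,q,x)\,\} + O(\mathrm{edge\ terms}),
\]
the edge terms accounting for $p=2$, perfect-square $q^2-p^2$, and degenerate pairs.

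The crux, and the step I expect to be the main obstacle, is to evaluate this sum and to watch the multiplication table constant appear. It is a question about the distribution of the divisors of the integers $q^2 - p^2$ in intervals reaching up to their square root, and I expect it to be controlled, exactly as in the Erd\H os--Ford analysis of the multiplication table, by the density $(\log x)^{-\delta}(\log\log x)^{-3/2}$ of integers possessing such a divisor. Making this rigorous needs two ingredients on top of Ford's combinatorial machinery. First, equidistribution of the primes $p,q$ in the residue structure imposed by $d \mid q^2-p^2$, i.e. a level-of-distribution statement for the congruence $q^2 \equiv p^2 \pmod d$ with $d$ ranging up to nearly $\sqrt x$, which I would obtain by sieve or dispersion methods. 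Second, matching Ford-type upper and lower bounds for the number of divisors in the relevant intervals, now weighted by this prime information. Producing a genuine two-sided estimate of the order $x(\log\log x)^{O(1)}/(\log x)^{1+\delta}$, rather than a one-sided bound, is where the real difficulty lies.

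For the remaining passage from $S_2$ to $N_2$, I would bound $\sum_{r\ge 3} N_r(x)$ from above by controlling $n$ with three representations $a^2+p^2 = b^2+q^2 = c^2+s^2$: such $n$ carry two essentially independent extra factorizations and so lie in a set of density smaller by a further factor $(\log x)^{-\delta+o(1)}$, reflecting two independent multiplication-table events. This gives $\sum_{r\ge 3}\binom r2 N_r(x) = o(S_2(x))$, which both legitimises the replacement of $S_2$ by $N_2$ and guarantees that the tail $3N_3 + 4N_4 + \cdots$ does not disturb the evaluation of $N_1$ above.
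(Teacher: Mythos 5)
Your overall skeleton (first moment via the prime number theorem, second moment to capture $N_2$, third moment to control the tail) is the same as the paper's, but the argument collapses at its central claim. You assert that $S_2(x)=\sum_{n\le x}\binom{r_1(n)}{2}$ has size $x(\log\log x)^{O(1)}/(\log x)^{1+\delta}$ and that $N_2(x)=S_2(x)(1+o(1))$. Both are false: by Daniel's theorem, quoted as \eqref{eq: Daniel} in the paper, the full second moment is $\sim\frac98\,x/\log x$, an order of magnitude larger than $N_2(x)$. The reason is that $S_2$ is dominated by rare integers with very many representations, namely those with $\omega(n)\sim 2\log\log x$ and $r_1(n)=(\log x)^{\log 4-1+o(1)}$ (Corollary \ref{cor: maincontrib}); for the same reason the unrestricted third moment $\sum_{n\le x}\binom{r_1(n)}{3}$ is \emph{larger} than $S_2$ (heuristically of size $x(\log x)^{o(1)}$), so both the inclusion--exclusion $N_2\ge S_2-3\sum_n\binom{r_1(n)}{3}$ and the claim $\sum_{r\ge3}\binom r2 N_r(x)=o(S_2(x))$ fail. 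The $N_1$ estimate inherits the same problem: you need $\sum_{r\ge2}rN_r(x)\ll x(\log\log x)^{O(1)}/(\log x)^{1+\delta}$, and the only bound your unstratified setup yields is $\sum_{r\ge2}rN_r(x)\le 2S_2(x)\ll x/\log x$, which does not isolate the secondary term.

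The missing idea is stratification by the number of prime factors. The paper proves (Lemma \ref{lem: binombounds4R1}, Theorem \ref{thm: r1(n)choose 2}) that $\sum_{\omega^*(n)=k}\binom{r_1(n)}{\ell}$ is of size $x L^{O_\ell(1)}(\log x)^{-(\ell+1)}(2^{\ell-1}L)^k/k!$, and these quantities change monotonicity at $k=K:=\lfloor\log\log x/\log 2\rfloor$. Summing the $\ell=1$ bound over $k\ge K$ and the $\ell=2$ bound over $k<K$ gives the upper bound $xL^{O(1)}/(\log x)^{1+\delta}$ for $\sum_{r\ge 2} rN_r(x)$ via Stirling applied to $L^K/K!$; it is this computation, not Ford's divisor-in-interval machinery applied to $q^2-p^2=(a-b)(a+b)$, that produces $\delta$ (the same Erd\H os-style mechanism, but applied to $\omega(n)$ near $\log\log x/\log 2$ rather than to divisors of $q^2-p^2$). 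For the lower bound on $N_2$ the paper works at $k=K-\tau$ with $\tau\asymp\log\log\log x$, where the restricted third moment is smaller than the restricted second moment by a factor $2^{-\tau}L^{O(1)}$, so the inclusion--exclusion finally closes. Your parametrization of pairs of representations is a legitimate identity, but as written it computes exactly the unstratified $S_2$, which is the wrong quantity; to make any version of your divisor-counting route work you would still have to restrict to $n$ with a prescribed, slightly subcritical number of prime factors before comparing moments.
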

 
The second result in Theorem \ref{thm: main} improves the error term in Theorem 2 of \cite{Da}.

Theorem \ref{thm: main}  follows from estimates for the number of pairs of solutions $n=a^2+p^2=b^2+q^2$ with $p,q$ prime, where the   number of distinct prime factors of $n$, denoted by $\omega(n)$, is given:

 \begin{thm} \label{thm: r1(n)choose 2} For any   integer $k\asymp \log\log x$ we have 
 \[
 \sum_{\substack{n\leq x \\ \omega(n)=k}} \binom{r_1(n)}2 \asymp   \frac {x L^{O(1)}}{(\log x)^{3}}\frac{(2 L)^{k}}{k!},
 \]
 where here and throughout it is convenient to define $L:=\log\log x$.
 \end{thm}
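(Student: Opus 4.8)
The plan is to read $\binom{r_1(n)}{2}$ as the number of unordered pairs of distinct representations $n=a^2+p^2=b^2+q^2$ with $p,q$ prime and $a,b\geq 1$, and to parametrise these pairs through $\ZZ[i]$. Since $p=q$ forces $a=b$, any genuine pair has $p\neq q$; ordering $p>q$ (so $b>a$) and putting $e=\gcd(b-a,\,p-q)$ produces positive integers $e,r,s,t$ with $\gcd(s,t)=1$ and
\[
a+ip=\tfrac12(t+is)(r+ie),\qquad b+iq=\tfrac12(t+is)(r-ie),
\]
whence $4n=(s^2+t^2)(r^2+e^2)$ and $p,q=\tfrac12(sr\pm et)$. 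First I would check that $(a,p,b,q)\mapsto(e,r,s,t)$ is a bijection onto the quadruples obeying the evident parity, coprimality and positivity constraints, turning $\sum_{\omega(n)=k}\binom{r_1(n)}{2}$ into a count of $(e,r,s,t)$ for which both $\tfrac12(sr+et)$ and $\tfrac12(sr-et)$ are prime and $\omega\bigl(\tfrac14(s^2+t^2)(r^2+e^2)\bigr)=k$.

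Next I would organise this count by the Gaussian factorisation $4n=N(\mu)N(\nu)$ with $\mu=t+is$, $\nu=r+ie$, summed over all such splittings. The decisive point is that, for fixed $\nu$, the map $(s,t)\mapsto(p,q)=\bigl(\tfrac12(rs+et),\tfrac12(rs-et)\bigr)$ is linear with Jacobian $\pm re/2$, so as $\mu$ varies the two primes $p,q$ become two \emph{independent} coordinates of a lattice of covolume $\asymp re$. In the balanced range $N(\mu)\asymp N(\nu)\asymp\sqrt{x}$, which carries the bulk of the mass, the pair $(p,q)$ therefore ranges over a genuinely two–dimensional region, so requiring $p$ and $q$ both prime does \emph{not} create a twin–prime correlation: it decouples into two one–dimensional prime conditions, each costing a factor $\asymp 1/\log x$ together with a singular series attached to $\nu$. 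For the upper bound I would apply a Selberg (or large–sieve) upper bound to each coordinate, and for the lower bound a lower–bound sieve or Bombieri–Vinogradov averaged over $\nu$ — available precisely because of the decoupling. The thin near–diagonal range where $\mu$ is small and the two representations almost coincide (so $p\approx q$, the genuinely twin–prime–like corner) only needs an upper–bound sieve, as it contributes within the error.

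The weight $\tfrac{(2L)^k}{k!}$ should then come from the condition $\omega(n)=k$ together with the sum over factorisations. Generically $p,q$ are large primes coprime to $n$, so the prime factors of $n$ are exactly those of $(s^2+t^2)(r^2+e^2)$, all $\equiv1\pmod 4$ in the primitive case; summing over the splittings $4n=N(\mu)N(\nu)$ weights each prime $\ell=\pi\bar\pi\mid n$ by a bounded number of placements of $\pi,\bar\pi$, producing an effective multiplicative weight of size $4^{\omega_1(n)}$ on sums of two squares, where $\omega_1$ counts prime factors $\equiv1\pmod4$. Its Dirichlet series has a pole of order $4\cdot\tfrac12=2$ at $s=1$ (weight $4$ per prime, times the density $\tfrac12$ of primes $\equiv1\pmod4$), so a Selberg–Delange analysis gives $\sum_{\omega(n)=k}4^{\omega_1(n)}[n\text{ a sum of two squares}]\asymp\frac{x}{\log x}\cdot\frac{(2L)^k}{k!}$. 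Multiplying by the $(\log x)^{-2}$ from the two prime conditions yields the claimed $\frac{x\,L^{O(1)}}{(\log x)^{3}}\frac{(2L)^k}{k!}$.

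I expect the main obstacle to be the lower bound with the correct order, uniformly for $k\asymp\log\log x$. One must show that a positive proportion of factorisations $\nu$ carry admissible, not–too–small singular series; that the associated lattice prime counts admit unconditional lower bounds of the right size (using the independence of $p$ and $q$ and a level–of–distribution input with uniformity in the modulus determined by $\nu$); and that imposing $\omega(n)=k$ costs exactly the combinatorial factor above and nothing more. Matching this against the Selberg-sieve upper bound while keeping the aggregate of the $\nu$–dependent singular series within a factor $L^{O(1)}$, and confirming that the degenerate ranges (where $p$ or $q$ divides $n$, where one of $\mu,\nu$ is small, or where $n$ is imprimitive) stay within the stated size, is the crux of the argument.
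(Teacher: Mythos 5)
Your parametrisation of pairs of representations by Gaussian factorisations $4n=N(\mu)N(\nu)$, the reduction of ``$p$ and $q$ both prime'' to a pair of linear forms (equivalently primes linked by a congruence modulo $\Delta\asymp re$), and the use of a sieve plus Bombieri--Vinogradov with a Selberg--Delange count for the $\omega(n)=k$ weight, all match the skeleton of the paper's argument. But the lower bound as you propose it has a fatal gap: you place the weight of the argument in the balanced range $N(\mu)\asymp N(\nu)\asymp\sqrt{x}$ and claim the two primality conditions ``decouple'' there. In that range $r,e\asymp x^{1/4}$, so the lattice $\{(p,q)\}=\{(\tfrac12(sr+et),\tfrac12(sr-et))\}$ has covolume $\asymp re\asymp\sqrt{x}$, while $p,q\leq X\asymp\sqrt{x}$. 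The lattice is formally of rank $2$ but contains only $\asymp X^2/(re)\asymp X$ points of the box $[0,X]^2$: for each prime $p$ there are $O(1)$ admissible values of $q$, determined by $q\equiv \ell p\pmod{\Delta}$ with $\Delta\asymp X$. A lower bound here amounts to showing that an explicit function of $p$ is prime for many primes $p$ --- a sparse-sequence/parity obstruction that no lower-bound sieve or level-of-distribution input (even on average over $\nu$, since the moduli run up to $\asymp X$ rather than $X^{1/2-\epsilon}$) can overcome. The decoupling you invoke is exactly what fails.

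The paper avoids this by proving the lower bound only over highly \emph{unbalanced} splittings: it writes $n=mh$ with $m=(r^2+s^2)(g^2+h^2)\leq x^{4\eta}$ carrying all but $O(\log\log\log x)$ of the prime factors of $n$, and $h=a^2+b^2$ a large $Y$-rough cofactor. Then the linking modulus is $\Delta=2gh(r^2+s^2)\leq x^{8\eta}$, a tiny power of $X$, so ``both prime'' really is primes in arithmetic progressions to small moduli, handled by the fundamental lemma together with Bombieri--Vinogradov. The point you would still have to supply --- and it is the crux --- is that this restricted family already produces the full order of magnitude $\frac{xL^{O(1)}}{(\log x)^3}\frac{(2L)^k}{k!}$; the paper gets this by loading the condition $\omega(n)=k$ entirely onto the small factors via Proposition \ref{prop: keysums} and the convolution identity $\sum_{\ell+\ell'=k-O(u)}\frac{L^{\ell}}{\ell!}\cdot\frac{L^{\ell'}}{\ell'!}\asymp\frac{(2L)^{k}}{k!}L^{O(1)}$. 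Your upper bound is also organised differently from the paper's (which extracts the largest prime factor $P(n)=a^2+b^2$ and sieves $\ell+1$ conditions on the single pair $(a,b)$, so that the $\omega^*(m)=k-1$ condition sits on a fixed parameter rather than on the sieved variable); your version can likely be made to work, but imposing $\omega(s^2+t^2)=j$ on the sieve variable while sieving two linear forms is an extra complication you have not addressed.
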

 
 This is maximized when $k=2L+O((L\log L)^{1/2})$ whence there are $\frac {x L^{O(1)}}{\log x}$ such pairs, which is consistent with Stephan Daniel's remarkable result \cite{Da}
 \begin{equation} \label{eq: Daniel}
  \sum_{n\leq x} \binom{r_1(n)}2= \frac 98 \cdot \frac {x}{\log x}+O\bigg(\frac {x(\log\log x)^2}{(\log x)^2}  \bigg) .
\end{equation}

 In Corollary \ref{cor: maincontrib} we will show that the main contribution to the sum in \eqref{eq: Daniel}  comes from those integers $n$ with 
 \[
 \omega(n)\sim 2\log\log x \text{ and }  r_1(n)= (\log x)^{\log 4-1+o(1)} . 
 \]

The proofs of the upper bounds in the Theorems (which are given in sections 2  and 4) are based on the methods of \cite{Sa}; here we are less precise but use the techniques in more delicate problems.

We want a heuristic to get a better idea of what is going on.

 \subsection*{Main Conjecture} If $2\leq r=o(\log x)$ we have
 \[
N_r(x) \sim    \frac{\psi_r(\frac{\log\log x}{\log 2}) }{\sqrt{\log\log x}}   \cdot \frac x{(\log x)^{ 1+\delta}} ,
\]
 where $\psi_r(t)$ is a 1-periodic continuous, non-constant, positive-valued function, and for $t\in [0,1)$ we have, for $\beta=2^{1-t}$,
 \[
 \psi_r(t) =  \frac{\kappa}{r!} \sum_{m\in \mathbb Z}   (2^{m}\beta)^{ r-1-\tau}    e^{-2^m\beta} .
 \]
for some constant $\kappa>0$. This function is not monotone decreasing in $r$; indeed if $r\geq 22$ then there are values of $t_+,t_-$ such that $ \psi_r(t^+) > \psi_{r+1}(t^+) $ and $ \psi_r(t^-) < \psi_{r+1}(t^-) $, suggesting that for each $r\geq 22$ there are arbitrarily large values of $x_+,x_-$ such that $ N_r(x^+) > N_{r+1}(x^+) $ and $ N_r(x^-) < N_{r+1}(x^-) $.

As a consequence of our formula for the $\psi_r(t)$, we believe that
 \[
 \# \{ n\leq x: r_1(n)\geq 2\} \sim \frac{\psi_2^*(\frac{\log\log x}{\log 2}) }{\sqrt{\log\log x}}   \cdot   \frac x{(\log x)^{ 1+\delta}} ,
 \]
 where $\psi_2^*(t) $ is the 1-periodic continuous, non-constant, positive-valued function 
 \[
 \psi_2^*(t) = \kappa     \sum_{m\in \mathbb Z}  (2^m\beta)^{ -1-\tau}     ( 1 -(1+2^m\beta)e^{-2^m\beta} ).
 \]
 Another consequence of our formula for the  $\psi_r(t)$, together with  \eqref{eq: sumidff}, suggests that
\[
\frac {\pi  }{2 } \cdot \frac {x}{\log x} -N_1(x) \sim   \frac{\psi_1^*(\frac{\log\log x}{\log 2}) }{\sqrt{\log\log x}}   \cdot \frac x{(\log x)^{ 1+\delta}}   
\]
where $\psi_1^*(t) $ is the 1-periodic continuous, non-constant, positive-valued function 
 \[
 \psi_1^*(t) = \kappa     \sum_{m\in \mathbb Z}  (2^m\beta)^{ -\tau}     ( 1 -e^{-2^m\beta} ).
 \]
We also deduce the conjecture for the secondary term in $N_0(x)$, using \eqref{eq: sumidff}, with  $\psi_0^*(t) = \psi_1^*(t)- \psi_2^*(t)$.
 
 To obtain the correct powers of $\log\log x$ in the Theorems would be considerably more difficult and we believe would require some new ideas.
   
 \subsection{Notation and simple details of sums of squares} Let $r_0(n)$ denote the number of representations of $n$ as the sum of two squares in the upper right quadrant; that is,
 $r_0(n)=\#\{ x\geq 0, y>0: n=x^2+y^2\}$. Corresponding to each such solution we have complex number $x+iy$  and there are  four distinct ``conjugate'' integer solutions, multiplying through by $1,i,-1,-i$ (unless $n=0$). We are considering prime values in sums of two squares, so we wish to define
 \[
 r_0^*(n):=\#\{ a\geq 0,b>0: n=a^2+b^2 \text{ and } (a,b)=1\}
 \]
 which is a multiplicative function with $r_0^{*}(2)=1, r_0^{*}(2^k)=0$ for all $k\geq 2$, and $r_0^{*}(p^k)=1+(\frac{-1}p)$ for all odd primes and $k\geq 1$.
Therefore $r_0^*(n)=0$ if and only if $n\in \mathcal N^c$ where $\mathcal N:=\{ n:\ 4\nmid n, \text{ and } p\nmid n \text{ for all primes } p\equiv 3 \pmod 4\}$. If
$n\in \mathcal N$ then $r_0^*(n)=2^{\omega^*(n)}$
where $\omega^*(n)$ is the number of odd prime factors of $n$. Therefore the values of $ r_0^*(n)$ are either 0 or a power of 2.

If $(m,n)=1$ with  $m=a^2+b^2,\ n =c^2+d^2$ with $a,b,c,d>0$ and $(a,b)=(c,d)=1$, then there correspond two distinct solutions to $mn=x^2+y^2$ with $(x,y)=1$ and $x,y>0$, given by
\[
x+iy=(ac+bd)+i |ad-bc| = |ac-bd|+i (ad+bc)
\]
and as $a^2+b^2$ runs through all the solutions counted by $r_0^*(m)$, and 
$c^2+d^2$ all those solutions counted by $r_0^*(n)$,   then $x^2+y^2$ twice runs  through all the solutions counted by $r_0^*(mn)$ since the pairs $x+iy$ given by $a+ib, c+id$ are the same as those given by $b+ia, d+ic$.

Let $r_1^*(n)$ be the number of representations of $n=a^2+p^2$ where $a>0$ and $(p,a)=1$.

 Let $L:=\log\log x, \lambda=\frac 1{\log 2}$ and $K=\lfloor \lambda L \rfloor=\lfloor \frac{\log\log x}{\log 2} \rfloor$ so that $2^K\in (\frac 12 \log x,\log x]$.
  Denote by $P(n)$ the largest prime factor of an integer $n$ and by $(\frac \cdot \cdot)$ the Legendre symbol. For $k \ge 2$ let $d_k(n) = \# \{ n = x_1 \cdots x_k \}$, $d_2(n) = \tau(n)$.

 \section{Simple applications of known results}
 
 We need to show that most solutions to $n=a^2+p^2$ with $p$ prime have $p\nmid a$:
   
 \begin{lemma} \label{lem: gcd=1} 
 The number of $a^2+p^2\leq x$ with $(a,p)>1$ is 
 \[
 \sum_{n\leq x}( r_1(n)-r_1^*(n)) =O(\sqrt{x}\log\log x).
 \]
 \end{lemma}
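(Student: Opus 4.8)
The plan is to reinterpret the left-hand side as a count of lattice-point pairs and then estimate a double sum organized around the primes. First I would observe that summing $r_1(n)$ over $n\leq x$ just counts all pairs $(a,p)$ with $p$ prime, $a>0$ and $a^2+p^2\leq x$, and that subtracting the starred version removes exactly the pairs with $(a,p)=1$, so that
\[
\sum_{n\leq x}\bigl(r_1(n)-r_1^*(n)\bigr)=\#\{(a,p):\ p\text{ prime},\ a>0,\ a^2+p^2\leq x,\ (a,p)>1\}.
\]
Since $p$ is prime, the condition $(a,p)>1$ is equivalent to $p\mid a$. I would therefore write $a=pm$ with $m\geq 1$, so that $n=p^2(m^2+1)$ and the constraint becomes $p^2(m^2+1)\leq x$.

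Next I would fix the prime $p$ and count the admissible $m$. For each $p$ the number of integers $m\geq 1$ with $p^2(m^2+1)\leq x$ is at most $\sqrt{x/p^2-1}<\sqrt{x}/p$, and there is an admissible $m$ (take $m=1$) only when $2p^2\leq x$, i.e.\ $p\leq\sqrt{x/2}$. Hence
\[
\sum_{n\leq x}\bigl(r_1(n)-r_1^*(n)\bigr)\ \leq\ \sum_{p\leq\sqrt{x/2}}\frac{\sqrt{x}}{p}\ =\ \sqrt{x}\sum_{p\leq\sqrt{x/2}}\frac1p.
\]
Finally I would invoke Mertens' theorem, $\sum_{p\leq y}1/p=\log\log y+M+o(1)$, with $y=\sqrt{x/2}$, so that $\sum_{p\leq\sqrt{x/2}}1/p=\log\log x+O(1)$, which yields the claimed bound $O(\sqrt{x}\log\log x)$.

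There is really only one point where the organization matters, and it is precisely what produces the factor $\log\log x$ rather than $\log x$: one must sum over the prime $p$ on the outside and the multiple $m=a/p$ on the inside, so that the inner count is the harmless $\sqrt{x}/p$ and the outer sum is controlled by Mertens. Summing the other way—fixing $m$ and counting primes with $p\leq\sqrt{x/(m^2+1)}$—and using the naive bound that the number of primes up to $y$ is at most $y$ would give the harmonic sum $\sum_{m\leq\sqrt{x}}\sqrt{x}/m$ and hence only the weaker $O(\sqrt{x}\log x)$; it is the sparsity of the primes, captured by Mertens' estimate, that sharpens this. Since no input beyond Mertens (or even Chebyshev's bound with partial summation) is needed, I expect no genuine obstacle here.
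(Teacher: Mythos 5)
Your argument is correct and is essentially identical to the paper's proof: both write $a=pm$, bound the number of admissible $m$ for each fixed prime $p$ by $\sqrt{x}/p$, and then apply Mertens' estimate to the sum over primes. The extra discussion about the order of summation is a fair observation but not needed.
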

 
  \begin{proof}    For each prime $p$ write $a=pm$ so that $m^2\leq x/(p^2+1)$ so the number of such $m$ is $\ll \sqrt{x}/p$, and so in total we have $\leq \sqrt{x} \sum_{p\leq \sqrt{x}} 1/p \ll \sqrt{x}\log\log x$.
  \end{proof}

 \begin{lemma} \label{lem: for smooths}
 Let $z=x^{1/L}$. For any integer $m\geq 1$  we have
 \[
  \sum_{\substack{n\leq x \\ P(n)\leq z \text{ or } P(n)^2|n}} r_0^*(n)^m  \ll_m \frac x{(\log x)^{1000}} 
 \]
 where $P(n)$ is the largest prime factor of $n$.
 \end{lemma}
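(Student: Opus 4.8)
The plan is to reduce the whole statement to the elementary pointwise bound $r_0^*(n)^m\le M^{\omega(n)}$ with $M:=2^m$, which holds for every $n$ because $r_0^*(n)\le 2^{\omega(n)}$ (it equals $2^{\omega^*(n)}$ for $n\in\mathcal N$ and $0$ otherwise). It then suffices to bound the two sums
\[
S:=\sum_{\substack{n\le x\\ P(n)\le z}} M^{\omega(n)}\qquad\text{and}\qquad T:=\sum_{\substack{n\le x\\ P(n)^2\mid n,\ P(n)>z}} M^{\omega(n)}
\]
separately, since every $n$ with $P(n)^2\mid n$ and $P(n)\le z$ is already counted in $S$. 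I will show $S,T\ll_m x/(\log x)^{1000}$.

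For $S$ I would apply Rankin's trick with $\sigma:=1-A/\log z$, where $A=A(m)$ is a large constant to be chosen. For any $x$,
\[
S\le x^{\sigma}\sum_{\substack{n\ge1\\ P(n)\le z}}\frac{M^{\omega(n)}}{n^{\sigma}}=x^{\sigma}\prod_{p\le z}\Big(1+\frac{M}{p^{\sigma}-1}\Big).
\]
Here $x^{1-\sigma}=e^{A\log x/\log z}=e^{AL}$ gives $x^{\sigma}=x\,e^{-AL}$, while $\log(1+t)\le t$ and the elementary estimate $\sum_{p\le z}p^{-\sigma}\ll L+e^{A}$ (the term $L$ coming from the small primes via Mertens, the term $e^A$ from the primes near $z$, using $e^{t}-1\le t e^{t}$) yield
\[
\log S\le \log x-AL+CM\,(L+e^{A})=\log x-(A-CM)L+CM e^{A}
\]
for an absolute constant $C$. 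Choosing $A:=1001+CM$, the quantity $CM e^{A}$ is $O_m(1)$, so for all large $x$ (where $L=\log\log x$ exceeds it) we get $\log S\le \log x-1000L$, that is $S\le x\,e^{-1000L}=x/(\log x)^{1000}$.

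For $T$ I would write $n=p^{a}k$ with $p=P(n)>z$, $a\ge2$ and $p\nmid k$, so that $M^{\omega(n)}=M\cdot M^{\omega(k)}$ and $k\le x/p^{a}$. Bounding $M^{\omega(k)}\le d_M(k)$ and invoking the classical $\sum_{k\le y}d_M(k)\ll_m y(\log 2y)^{M-1}$, then summing $\sum_{a\ge2}p^{-a}\ll p^{-2}$ and $\sum_{p>z}p^{-2}\ll 1/z=x^{-1/L}$, I obtain
\[
T\ll_m x(\log x)^{M-1}\,x^{-1/L}.
\]
Since $\log x/L=\log x/\log\log x$ exceeds $(M+1000)\log\log x$ for all large $x$, the factor $x^{-1/L}=e^{-\log x/L}$ beats $(\log x)^{M-1+1000}$, giving $T\ll_m x/(\log x)^{1000}$.

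The main obstacle is $S$, and specifically the calibration of $\sigma$ in Rankin's trick: one must keep the prime product $\prod_{p\le z}(1+M/(p^{\sigma}-1))$ from swamping the gain $x^{\sigma}=x\,e^{-AL}$. What makes it work is that $z=x^{1/L}$ forces the smoothness parameter $u=\log x/\log z=L\to\infty$, so $z$-smooth integers below $x$ are extremely rare (density $\le u^{-(1+o(1))u}=L^{-(1+o(1))L}$), and the weight $M^{\omega(n)}$ inflates the relevant exponent only by the bounded-in-$x$ amount $O_m(1)$. The one point requiring care is verifying that the small-prime part of $\sum_{p\le z}p^{-\sigma}$ stays of size $L$ rather than $e^{A}L$, so that a constant $A=A(m)$ already produces an arbitrarily large power saving.
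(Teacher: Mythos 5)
Your argument is correct. Both you and the paper attack the smooth part with Rankin's trick, but the executions differ in two genuine ways. First, the decomposition: the paper keeps the two conditions $P(n)\le z$ and $P(n)^2\mid n$ together and majorizes the whole sum by a single Euler product $x^\sigma\prod_{p\le z}(1+r_0^*(p)^m p^{-\sigma}+\cdots)\prod_{z<p\le x}(1+r_0^*(p^2)^m p^{-2\sigma}+\cdots)$, whereas you split off the set $\{P(n)>z,\ P(n)^2\mid n\}$ and dispose of it by writing $n=p^ak$, bounding $M^{\omega(k)}\le d_M(k)$ and using $\sum_{p>z}\sum_{a\ge2}p^{-a}\ll z^{-1}=x^{-1/L}$. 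Your separate treatment is actually the more transparent of the two: an integer such as $n=qP^2$ with $z<q<P$ has a large prime to the first power, and it is not visibly captured by the paper's second Euler factor, so your split sidesteps a point the paper glosses over. Second, the calibration of $\sigma$: the paper takes $\sigma=1-\log(L\log L)/\log z$, which costs a factor $\exp(O_m(\sum_{p\le z}p^{-\sigma}))=\exp(O_m(L))=(\log x)^{O_m(1)}$ but gains $x^{1-\sigma}=(L\log L)^{L}$, yielding the stronger bound $x/L^{L}$; you take $\sigma=1-A/\log z$ with $A=A(m)$ constant, which gains only $e^{AL}=(\log x)^{A}$ but keeps the Euler product at $\exp(O_m(L+Ae^{A}))$, enough for the stated $(\log x)^{-1000}$ once $A$ is chosen large in terms of $m$. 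Your refinement $\sum_{p\le z}p^{-\sigma}\ll L+Ae^{A}$ (rather than the trivial $e^{A}L$) is the one delicate step and you have justified it correctly via $e^{t}-1\le te^{t}$; with the exponent written as $Ae^A$ rather than $e^A$ the constant is still $O_m(1)$, so the conclusion stands. In short: same engine for the smooth part, a more elementary and more careful handling of the $P(n)^2\mid n$ part, at the price of a weaker (but fully sufficient) saving.
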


 \begin{proof}   For $\sigma=1-\frac{\log(L\log L)}{\log z} \in (\frac 12,1)$ we have
\begin{align*}
  \sum_{\substack{n\leq x \\ P(n)\leq z \text{ or } P(n)^2|n }} r_0^*(n)^m &\leq   \sum_{\substack{n\geq 1 \\ P(n)\leq z \text{ or } P(n)^2|n}} r_0^{*}(n)^m (x/n)^\sigma\\
&  = x^\sigma \prod_{p\leq z} \bigg( 1 + \frac{r_0^*(p)^m}{p^\sigma} +\cdots \bigg) \prod_{z<p\leq x} \bigg( 1 + \frac{r_0^*(p^2)^m}{p^{2\sigma}} +\cdots \bigg)\\
& \ll  \frac x{(L\log L)^L} \exp\bigg( \sum_{p\leq z}\frac{2^m}{p^\sigma} \bigg)  \ll \frac x{L^L}
\end{align*}
 as $\sum_{p\leq z}\frac{1}{p^\sigma} \ll  \frac{z^{1-\sigma}}{ (1-\sigma)\log z}\ll L$
 and the result follows.
   \end{proof}

 \section{Counting integers with a given number of prime factors}
 
 Selberg \cite{Se} gave a method for determining the number of integers with a given number of prime factors, and his method can easily be modified to give estimates in many analogous questions, including estimating
 \[
 \pi_\mathcal N(x;k):=\#\{ n\leq x: n\in \mathcal N \text{ and } \omega^*(n)=k\}
 \]
 for all $k\geq 1$. We first need to construct an appropriate family of Dirichlet series:
 \[
\mathcal D_\mathcal N(s,z):= \sum_{\substack{n\leq x\\ n\in \mathcal N}} \frac{(2z)^{\omega^*(n)}}{n^s}  = \bigg( 1 + \frac 1{2^s}\bigg) 
 \prod_{p\equiv 1 \pmod 4} \bigg( 1 + \frac{2z}{p^s-1}\bigg) 
 \]
 since the coefficient of $(2z)^k$  is the sum over $n \in   \mathcal N$ with $ \omega^*(n)=k$, and can be picked out by dividing through by $z^{k+1}$ and integrating around a close contour including $z=0$.  We can compare 
 $\mathcal D_\mathcal N(s,z)$ to $\zeta(s)^z$ (where $\zeta(s)$ is the Riemann zeta-function), the quotient being ``well-behaved'' in a wide enough region of $s$-values, and then the number of such integers up to $x$ can be estimated using Perron's formula and well-known information about $\zeta(s)$. Therefore for any given $A>0$, if $k\leq A\log\log x$ then
\begin{equation} \label{eq: pi_N estimate}
   \pi_\mathcal N(x;k) \sim c_\kappa \cdot \frac x{\log x} \frac{(\frac 12\log\log x)^{k-1}}{(k-1)!},
\end{equation}
 where $\kappa=\frac{k-1}{\log\log x}$ and the constant $c_\kappa>0$ is given by the infinite product
 \[
 c_\kappa:= \frac 3{2^{\kappa+2} \Gamma(\kappa+1)}\cdot   \prod_{p\equiv 1 \pmod 4} \bigg( 1 + \frac{2\kappa}{p-1}\bigg)  \bigg( 1 - \frac 1{p}\bigg)^\kappa
  \prod_{p\equiv 3 \pmod 4}  \bigg( 1 - \frac 1{p}\bigg)^\kappa .
 \]
 We also need an upper bound that is uniform in a wide range.  Here we can imitate the famous argument of Hardy and Ramanujan, so that there exist constants $\gamma_1,\gamma_2>0$ such that for all $k\geq 1$ we have
 \begin{equation} \label{eq: pi_N upperbd}
   \pi_\mathcal N(x;k) \leq  \gamma_1 \frac x{\log x} \frac{(\frac 12\log\log x+\gamma_2)^{k-1}}{(k-1)!}.
\end{equation}
This follows from the prime number theorem for $k=1$ and we then proceed by induction:  Write $n$ counted by 
  $ \pi_\mathcal N(x;k) $ as $2^{e_0}p_1^{e_1}\cdot p_k^{e_k}$, where $e_1,\dots,e_k\geq 1$ and $e_0=0$ or $1$. Then
  $n=p_j^{e_j}m_j$ where $m_j$ is counted by $ \pi_\mathcal N(x/p_j^{e_j};k-1) $, for each $j$, and so
  \[
k \pi_\mathcal N(x;k) \leq \sum_{p^e\leq x/5^{k-1}}  \pi_\mathcal N(x/p_j^{e_j};k-1)
\leq \sum_{\substack{p^e\leq x/5^{k-1} \\ p\equiv 1 \pmod 4}}   \gamma_1 \frac x{p^e\log x/p^e} \frac{(\frac 12\log\log x+\gamma_2)^{k-1}}{(k-1)!}
  \]
  since each prime power $p_j^{e_j}$ is $\geq 5$.
 Then \eqref{eq: pi_N upperbd} follows from the estimate
 \[
 \sum_{\substack{p^e\leq x/5^{k-1} \\ p\equiv 1 \pmod 4 }}\frac {\log x}{p^e\log x/p^e}  \leq \frac 12\log\log x+\gamma_2
 \]
 for some $\gamma_2>0$, which follows from Dirichlet's work on primes in arithmetic progression, and partial summation.
 

 \subsection{Estimating the size of a useful set}
 
 For a given interval $I\subset [0,\frac \pi 2]$ and a given integer $m\geq 1$, define $ \mathcal S_{m,k} (I,R)$ to be the 
 set of $r+is\in \mathbb Z[i]$ with $ \arg(r+is)\in I$, where for $n:=r^2+s^2$ we have
 \[
 n\in [2R^2, 4R^2],   \omega^*(n)=k \text{ and } (n,2m)=1 .
 \]
 If $I\subset (0,\frac \pi 2)$ then $r,s\gg_I R$, which can be useful.

 \begin{prop} \label{prop: keysums} If $m=R^{O(1)}$ and $I\ne \emptyset$ then 
 \[
\#  \mathcal S_{m,k} (I,R) \asymp_{k,I} \frac { R^2}{\log R} \cdot \frac {(\log\log R)^{k-1}}{(k-1)!} \cdot L^{O(1)}.
 \]
 \end{prop}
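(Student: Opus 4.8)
The plan is to reduce the lattice–point count to a sum over the norms $n=r^2+s^2$, separating the number of admissible $n$ from the way the representations of each such $n$ distribute by argument. Setting $N_I(n):=\#\{r+is\in\mathbb{Z}[i]:\ r^2+s^2=n,\ \arg(r+is)\in I\}$, we have
\[
\#\mathcal S_{m,k}(I,R)=\sum_{\substack{2R^2\le n\le 4R^2\\ \omega^*(n)=k,\ (n,2m)=1}} N_I(n).
\]
By Lemma~\ref{lem: for smooths}, applied with $x\asymp R^2$, the $n$ with $P(n)\le z$ or $P(n)^2\mid n$ contribute negligibly, so we may write $n=Pn'$ with $P=P(n)$ a prime $\equiv1\pmod4$, $P\,\|\,n$, $P>z$, and $n'\in\mathcal N$, $\omega^*(n')=k-1$. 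For such $n\in\mathcal N$ the representations are primitive and $N_{[0,\pi/2]}(n)=2^{\omega^*(n)}=2^k$, the $2^k$ points arising from the two choices of Gaussian prime above each split rational prime factor.

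For the upper bound the angular condition is free, since $N_I(n)\le N_{[0,\pi/2]}(n)\le \tfrac14 r_2(n)+O(1)$, and a nonzero term forces $n$ to be a sum of two squares. The $n$ with a prime factor $\equiv3\pmod4$ (necessarily to an even power, hence $n$ squarefull at that prime) form a sparse set which, together with the smooth and $P(n)^2\mid n$ parts removed by Lemma~\ref{lem: for smooths}, contributes negligibly; for the surviving $n\in\mathcal N$ we have $\tfrac14 r_2(n)=2^{\omega^*(n)}=2^k$. Hence
\[
\#\mathcal S_{m,k}(I,R)\ \ll\ 2^k\,\pi_\mathcal N(4R^2;k)+\text{(negligible)},
\]
and the Hardy--Ramanujan–type bound \eqref{eq: pi_N upperbd} together with the identity $2^k(\tfrac12\log\log R)^{k-1}=2(\log\log R)^{k-1}$ (the shift $\gamma_2$ absorbed into $L^{O(1)}$) gives the upper bound $\ll \frac{R^2}{\log R}\cdot\frac{(\log\log R)^{k-1}}{(k-1)!}\cdot L^{O(1)}$.

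For the lower bound one must produce a positive, $I$–dependent proportion of representations inside the sector, and here the idea is to use a ``free'' large prime. Fix $n'\in\mathcal N$ with $\omega^*(n')=k-1$, $(n',2m)=1$ and $n'\le R^{2-\varepsilon}$, and fix a representation $r'+is'$ of $n'$ of argument $\phi$. For a split prime $P\equiv1\pmod4$ with Gaussian prime $\pi_P$ of argument $\theta_P$, the Gaussian integer $\pi_P(r'+is')$ has norm $Pn'$ and argument $\theta_P+\phi$ (the other associates and the conjugates producing all of $\pm\theta_P\pm\phi$ modulo $\tfrac\pi2$), so it lies in $\mathcal S_{m,k}(I,R)$ whenever $Pn'\in[2R^2,4R^2]$, $P\nmid2m$, and one of these arguments falls in $I$. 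By the equidistribution of Gaussian primes in angular sectors (Hecke--Landau, with the uniformity of Kubilius), the number of admissible $P$ is $\gg_I \frac{R^2/n'}{\log(R^2/n')}\asymp \frac{|I|}{n'}\cdot\frac{R^2}{\log R}$. Summing over the $2^{k-1}$ representations of $n'$ and then over the admissible $n'$—using $\sum_{n'}1/n'\asymp(\tfrac12\log\log R)^{k-1}/(k-1)!$ and $2^{k-1}(\tfrac12\log\log R)^{k-1}=(\log\log R)^{k-1}$—produces the matching lower bound $\gg_{k,I}\frac{R^2}{\log R}\cdot\frac{(\log\log R)^{k-1}}{(k-1)!}$, once the bounded overcounting by units and the negligible $n$ with two large prime factors are discarded.

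The main obstacle is precisely the angular input to the lower bound: one needs Gaussian primes of norm $P$ in a dyadic interval to be equidistributed by argument, uniformly enough both in the location of that interval (which varies with $n'$) and in the shift $\phi$, to guarantee a positive proportion with $\theta_P+\phi\in I$. This is furnished by the Hecke Gr\"ossencharakter $L$–functions of $\mathbb{Q}(i)$, but marrying it to the sum over $n'$, while simultaneously imposing $(n,2m)=1$—which discards only the primes dividing $m$ and hence costs at most a factor $1+o(1)$ in $\sum_{n'}1/n'$—and controlling the primitivity corrections coming from Lemma~\ref{lem: for smooths}, is the delicate bookkeeping that the complete argument must carry out.
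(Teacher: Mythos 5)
Your proposal follows essentially the same route as the paper: discard the smooth and squarefull $n$ via Lemma~\ref{lem: for smooths}, peel off the largest prime factor, obtain the upper bound by dropping the sector condition and counting $2^k$ representations per admissible $n$ (your appeal to \eqref{eq: pi_N upperbd} is just a packaged form of the paper's direct Hardy--Ramanujan-style sum over factorizations), and obtain the lower bound by treating the large prime as a free rotation and invoking Kubilius's equidistribution of Gaussian primes in sectors to land the argument in $I-\phi$, with the coprimality to $m$ costing only an $L^{O(1)}$ factor. The argument is correct at the same level of detail as the paper's own proof.
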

 
 \begin{proof} 
 It is convenient to write $k=\ell+1$ and then,
 excluding the various $n$ identified in Lemma \ref{lem: for smooths}, we 
  write each such $n=p_1^{e_1}\cdots p_\ell^{e_\ell} q$ where the $p_i$ and $q$ are distinct primes, each $\equiv 1 \pmod 4$
  with $q>R^{1/L}$. Therefore
\[
\# \mathcal S_{m,k} (I,R)\leq \# \mathcal S_{1,k} ([0,\tfrac \pi 2],R) = 
 \sum_{\substack{p_1^{e_1}\leq \dots \leq p_\ell^{e_\ell}\leq q \\ \text{Each } p_i\equiv 1 \pmod 4\\ \frac{2R^2}{p_1^{e_1}\cdots p_\ell^{e_\ell}}<q\leq \frac{4R^2}{p_1^{e_1}\cdots p_\ell^{e_\ell}}}} 2^k
 +O\bigg(  \frac {R^2}{(\log R)^{1000}}  \bigg) .
  \]
  Now the dyadic interval for $q$ must be between $R^{1/L}$ and $R$ and so the number of primes in the interval is 
  \[
  \asymp  \frac{1}{p_1^{e_1}\cdots p_\ell^{e_\ell}} \frac {R^2}{\log R^{1/L}} \leq \frac{L}{p_1^{e_1}\cdots p_\ell^{e_\ell}} \frac {R^2}{\log R}
  \]
  and so 
  \begin{align*}
 \# \mathcal S_{1,k} ([0,\tfrac \pi 2],R) &\ll \frac {2^k L R^2}{\log R} \cdot \sum_{\substack{p_1^{e_1}\leq \dots \leq p_\ell^{e_\ell}\leq R^2\\ \text{Each } p_i\equiv 1 \pmod 4}}  \frac{1}{p_1^{e_1}\cdots p_\ell^{e_\ell}} \\
  &\ll \frac {2^\ell L R^2}{\log R} \cdot \frac 1{\ell!} \bigg( \sum_{\substack{p^e\leq R^2\\ p\equiv 1 \pmod 4}}  \frac{1}{p^e} \bigg)^\ell \\
  & \ll \frac {L R^2}{\log R} \cdot \frac {(\log\log R+O(1))^\ell}{\ell!} = \frac {L^{O(1)} R^2}{\log R} \cdot \frac {(\log\log R)^{k-1}}{(k-1)!}
\end{align*}
for $k\ll \log\log R$.

To get lower bounds we proceed in much the same way.  For a lower bound on $\mathcal S_{1,k} ([0,\tfrac \pi 2],R)$ we need to give a lower bound for the sum over prime powers.  Each term with $p_\ell^{e_\ell}\leq B:=R^{1/L}$ is really in the sum (not just part of an upper bound), and then we have 
  \begin{align*}
\sum_{\substack{p_1^{e_1}\leq \dots \leq p_\ell^{e_\ell}\leq B\\ \text{Each } p_i\equiv 1 \pmod 4}}  \frac{1}{p_1^{e_1}\cdots p_\ell^{e_\ell}} &\geq
 \frac 1{\ell !}\bigg(  \bigg(\sum_{\substack{ p\leq B \\ p\equiv 1 \pmod 4}} \frac 1p \bigg)^\ell - 
  \bigg(\sum_{\substack{ p\leq B \\ p\equiv 1 \pmod 4}}\frac 1{p^2} \bigg) \bigg(\sum_{\substack{ p\leq B \\ p\equiv 1 \pmod 4}} \frac 1p \bigg)^{\ell-2}\bigg) \\
  &= \frac{ (\frac 12\log\log B+O(1))^\ell} {\ell !},
   \end{align*}
   and the same argument as above gives a lower bound which is smaller than the upper bound by a factor of $L^{O_\ell(1)}$.

 For a lower bound on $\# \mathcal S_{m,k} ([0,\tfrac \pi 2],R)$
we need to incorporate that $(n,m)=1$ where $m=R^{O(1)}$ in the argument above, so we need to exclude $O(\log R)$ primes from the sum over the $q$'s, and remove the prime factors of $m$ from the sum over the $p$'s, so that the 
$(\log\log R)^\ell$ is altered to $(\log\log R-\sum_{p|m} \frac 1p)^\ell$. Now $\sum_{p|m} \frac 1p\ll \log\log\log m\ll \log\log\log R$, and so $(\log\log R-\sum_{p|m} \frac 1p)^\ell= (\log\log R)^\ell\cdot L^{O(1)}$.

For a lower bound on $\# \mathcal S_{m,k} (I,R)$ for some non-empty $I\subset [0,\tfrac \pi 2]$ we need to incorporate that $\arg(r+is)\in I$ in the argument above.  We will do so by restricting the angle for $q$. So given an
    $x+iy$ with $x^2+y^2=p_1^{e_1}\cdots p_\ell^{e_\ell}$, we restrict the $u+iv$ for which $u^2+v^2=q$ by ensuring that 
    $\arg(u+iv) \in I-\arg(x+iy)$, an interval of the same length as $I$. Kubilius \cite{Ku} showed that the
    $\{ \arg(u+iv) : u^2+v^2 \text{ is prime}, \in [y,2y]\}$ are  equi-distributed on a dyadic interval as $y\to \infty$, and so we simply need to multiply the above estimates by the factor $|I|/\frac \pi 2$; in particular if $I=[\frac \pi 8, \frac {3\pi} 8]$ then the factor is $\frac 12$.
   \end{proof}
    
   \section{Upper bounds on $\sum_{n\leq x}  \binom{r_1(n)}\ell$}
   
 \begin{lemma} \label{lem: binombounds4R1} For any fixed integer $\ell\geq 1$ and integer $k\ll \log\log x$ we have 
 \[
 \sum_{\substack{n\leq x \\ \omega^*(n)=k}} \binom{r_1(n)}\ell  \ll_\ell   \frac {x L^{O_\ell(1)}}{(\log x)^{\ell+1}}\frac{(2^{\ell-1}L)^{k}}{k!}.
 \]
  \end{lemma}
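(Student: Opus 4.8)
The plan is to read $\binom{r_1(n)}{\ell}$ as $\tfrac1{\ell!}$ times the number of ordered $\ell$-tuples of \emph{distinct} representations $n=a_i^2+p_i^2$, so that
\[
\sum_{\omega^*(n)=k}\binom{r_1(n)}{\ell}=\frac1{\ell!}\#\Bigl\{(z_1,\dots,z_\ell)\ \text{distinct}:\ z_i=a_i+ip_i\in\mathbb Z[i],\ |z_i|^2=n\le x,\ \omega^*(n)=k,\ p_i\ \text{prime}\Bigr\},
\]
each representation being a first-quadrant Gaussian integer with prime imaginary part. First I would discard every tuple using a non-primitive representation (some $(a_i,p_i)>1$): by Lemma~\ref{lem: gcd=1} the total number of non-primitive representations is $O(\sqrt x\log\log x)$, and completing such a representation to a tuple costs at most $r_1(n)^{\ell-1}=x^{o(1)}$ choices, so the total such contribution is $\le\sqrt x\,x^{o(1)}$, negligible against the target. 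This reduces matters to tuples of primitive representations, for which $n\in\mathcal N$.

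Next comes the parametrization on which everything rests. For $n\in\mathcal N$ with odd prime factors $q_1,\dots,q_k$ (all $\equiv1\pmod4$, each splitting as $q_j=\pi_j\bar\pi_j$ in $\mathbb Z[i]$), a primitive $z$ with $|z|^2=n$ is, up to units and a possible factor $1+i$, a choice of orientation $\pi_j^{e_j}$ or $\bar\pi_j^{e_j}$ for each $j$; there are thus $r_0^*(n)=2^k$ of them. Consequently every representation in a tuple is obtained from a base representation $z_1$ by conjugating the orientation on some subset $S_i\subseteq\{q_1,\dots,q_k\}$, so an ordered $\ell$-tuple corresponds to a base $z_1$ together with distinct subsets $S_2,\dots,S_\ell$, equivalently to a colouring of each $q_j$ by its relative pattern $\tau_j=(\mathbf1[q_j\in S_2],\dots,\mathbf1[q_j\in S_\ell])\in\{0,1\}^{\ell-1}$. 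This is the source of the constant $2^{\ell-1}$: each prime factor may independently receive any of the $2^{\ell-1}$ patterns. I would group the primes by colour, $n=\prod_\tau N_\tau$ with the blocks pairwise coprime and $\sum_\tau\omega^*(N_\tau)=k$.

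The count is then assembled as follows. After removing the ranges $P(n)\le x^{1/L}$ and $P(n)^2\mid n$ by Lemma~\ref{lem: for smooths}, Proposition~\ref{prop: keysums} (summed over dyadic norm ranges up to $\sqrt x$, using Kubilius's theorem \cite{Ku} on equidistribution of the angles of Gaussian primes to localise the largest prime factor) counts $\asymp\frac{x}{\log x}\frac{(\log\log x)^{k-1}}{(k-1)!}L^{O(1)}$ primitive representations with $\omega^*(n)=k$; equivalently there are $\asymp\frac{x}{2^k\log x}\frac{(\log\log x)^{k-1}}{(k-1)!}L^{O(1)}$ admissible integers $n$, each with $2^k$ representations. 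Inserting the demand that an imaginary part $p_i\asymp\sqrt x$ be prime as an upper-bound sieve factor (of Brun--Titchmarsh/Selberg type) $\ll1/\log x$ leaves $\asymp2^k/\log x$ representations per $n$ with prime imaginary part. Choosing $\ell$ of these and summing over $n$ produces, through $2^{-k}(2^k)^\ell=(2^{\ell-1})^k$, the exponential $\frac{(2^{\ell-1}L)^k}{k!}$, one power of $\log x$ from the count of admissible $n$ and a further $(\log x)^{-\ell}$ from the $\ell$ primality conditions, giving the overall bound $\frac{x}{(\log x)^{\ell+1}}\frac{(2^{\ell-1}L)^k}{k!}L^{O(1)}$ as claimed.

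The main obstacle is exactly the simultaneous primality of the $\ell$ imaginary parts $p_1,\dots,p_\ell$. Writing $z_i=g\,w_i$ with $g=\gcd(z_1,\dots,z_\ell)$ shows that the $p_i$ are values of bilinear forms in the entries of $g$ and of the colour-blocks, so these conditions are correlated and cannot simply be replaced by a product of independent prime densities; moreover no $p_i$ divides $n$, so the saving cannot be extracted from the factorisation of $n$ itself, and the naive ``$r_1(n)\approx2^k/\log x$ for each $n$'' only controls a mean whereas $\sum_n\binom{r_1(n)}{\ell}$ is an $\ell$-th moment. The real work is to exploit the coprimality of the blocks $N_\tau$ to decouple the $\ell$ conditions enough that an upper-bound sieve delivers each factor $\ll1/\log x$ while losing only $L^{O(1)}$ overall; here being ``less precise'' than \cite{Sa} is what makes this feasible, since we only need the upper bound and not an asymptotic with the exact power of $\log\log x$.
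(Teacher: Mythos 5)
Your reductions (discarding non-primitive representations via Lemma~\ref{lem: gcd=1}, removing smooth and squarefull $n$ via Lemma~\ref{lem: for smooths}) and your combinatorial accounting for the factor $2^{\ell-1}$ per prime divisor both match the paper. But the central analytic step is missing, and you essentially say so yourself in your final paragraph: you never set up a sieve problem in which the $\ell$ simultaneous primality conditions can actually be executed. Your count proceeds by ``$\asymp 2^k/\log x$ representations per $n$ with prime imaginary part, choose $\ell$ of these,'' which is precisely the mean-value heuristic you correctly flag as insufficient for an $\ell$-th moment; the proposed rescue (``exploit the coprimality of the blocks $N_\tau$ to decouple the $\ell$ conditions'') is not developed and does not obviously work, since the blocks $N_\tau$ are divisors of $n$ while the primality conditions live on the imaginary parts of the Gaussian factorizations, which are not functions of the individual blocks.

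The paper's device, which you need here, is to isolate the largest prime factor: write $n=mp$ with $p=P(n)>x^{1/L}$, $p=a^2+b^2$, and $m$ having representations $u_i^2+v_i^2$. Every primitive representation of $n$ is then $(au_i+bv_i)^2+|bu_i-av_i|^2$, so $r_1^*(n)$ counts which of the $2^k$ \emph{fixed} linear forms in $\mathcal L_m=\{u_iX+v_iY,\,v_iX-u_iY\}$ take prime values at $(a,b)$. For a fixed $m$ and a fixed $\ell$-subset $I\subset\mathcal L_m$, the condition ``$a^2+b^2$ and the $\ell$ forms in $I$ all prime'' is a bona fide small-sieve problem in the two free variables $a,b\le (x/m)^{1/2}$ with sifting density $\ell+1+(\tfrac{-1}{p})$, giving $\ll \frac{x}{m}\cdot\frac{L^{O_\ell(1)}}{(\log x)^{\ell+1}}$ uniformly. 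The lemma then follows from $\binom{|\mathcal L_m|}{\ell}\ll 2^{\ell k}$ together with the Hardy--Ramanujan-type bound \eqref{eq: pi_N upperbd} applied dyadically to show $\sum_{\omega^*(m)=k-1,\ m=\square+\square}m^{-1}\ll (\tfrac12 L)^{k+O(1)}/k!$, so that $2^{\ell k}(L/2)^k=(2^{\ell-1}L)^k$. Without this (or some equivalent) linearization of the representations in terms of a genuinely free pair of variables, the sieve factor $(\log x)^{-(\ell+1)}$ cannot be extracted, and your argument does not close.
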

 
  The case $\ell=2$ of Lemma \ref{lem: binombounds4R1}  gives the upper bound in Theorem \ref{thm: r1(n)choose 2}.
 
\begin{proof} Each $r_1(n)\leq r_0(n)\leq \tau(n)=n^{o(1)}$, so  that
\begin{align*}
 \sum_{\substack{n\leq x \\ \omega^*(n)=k}} \bigg( \binom{r_1(n)}\ell - \binom{r_1^*(n)}\ell  \bigg)
& \ll_\ell \sum_{\substack{n\leq x \\ \omega^*(n)=k}} (r_1(n)-r_1^*(n))r_1(n)^{\ell-1}\\
& \ll x^{o(1)}\sum_{\substack{n\leq x \\ \omega^*(n)=k}} (r_1(n)-r_1^*(n)) \ll  x^{\frac 12+o(1)}
\end{align*}
 by Lemma \ref{lem: gcd=1}. 
 
 For the remaining $n$ we may assume
$P(n)>x^{1/L}$ and $P(n)^2\nmid n$ by Lemma \ref{lem: for smooths}, so we can
  write $n=mp$ where $p=P(n)>x^{1/L}$, so that $P(m)<p$ and $m\leq x^{1-1/L}$.
 If $p=a^2+b^2$ and $m$ has the representations $u_i^2+v_i^2$ for $1\leq i\leq r_0^*(m)$ as the sum of two  positive coprime squares, then every representation of $n$ as the sum of two positive coprime squares is given by 
 \[
 n=mp = (au_i+bv_i)^2 + |bu_i-av_i|^2 
 \]
(note that there exists $j$ such that $u_i=v_j$ and $v_i=u_j$).
 Therefore 
 \[
 r_1^*(n)=\# \{ i: au_i+bv_i \text{ is prime}\}+\# \{ i: |bu_i-av_i| \text{ is prime}\} ,
 \]
 and then letting $\mathcal L_m$ be the set of linear forms $\{ u_ix +v_iy,  v_ix-u_iy: 1\leq i\leq r_0^*(m)\}$, we have
 \begin{align*}
 \sum_{\substack{n\leq x \\ \omega^*(n)=k\\ P(n)>x^{1/L} }} \binom{r_1^*(n)}\ell 
  & = \sum_{\substack{m\leq x^{1-1/L} \\ \omega^*(m)=k-1\\ m=\square+\square}} \sum_{\substack{  x^{1/L} < p\leq x/m \\ p=a^2+b^2 >P(m)}} \binom{r_1^*(mp)}\ell \\
  & \leq  \sum_{\substack{m\leq x^{1-1/L} \\ \omega^*(m)=k-1}} \sum_{\substack{I\subset \mathcal L_m\\ |I|=\ell}} \sum_{\substack{ P(m)< a^2+b^2\leq x/m \\ a^2+b^2 \text{ and } \\ ar+bs \text{ prime for each } rx+sy \in I }} 1 .
  \end{align*}
 Now using the small sieve we have, for $y=x^{1/10L}$ for some small $c>0$,
  \begin{align*}
 \sum_{\substack{ P(m)< a^2+b^2\leq x/m \\ a^2+b^2 \text{ and }  \\ ar+bs \text{ prime for each } rx+sy \in I}} 1 
 &\leq \sum_{\substack{ a,b\leq (x/m)^{1/2} \\ ( (a^2+b^2)\prod_{rx+sy\in I}(ar+bs), \prod_{p\leq y} p)=1  }} 1 \\
 & \ll_\ell  \frac xm \prod_{\ell+2<p\leq y} \bigg( 1 - \frac{ \ell+1+ (\frac{-1}p)} p \bigg)  \prod_{p| R} 
 \bigg( 1 - \frac{1} p \bigg)^{O(\ell)} ,
  \end{align*}
where $R=m\prod_{i<j} (u_iv_j-v_iu_j)(u_iu_j+v_iv_j)\ll x^{O_\ell(1)}$ and so the last Euler product is $L^{O(1)}$. We also have
\[
 \prod_{\ell+2<p\leq y} \bigg( 1 - \frac{ \ell+1+ (\frac{-1}p)} p \bigg) \asymp
  \prod_{p\leq y} \bigg( 1 - \frac{1} p \bigg)^{\ell+1} L(1,(\tfrac{-1}p))^{-1} \asymp \frac 1{(\log y)^{\ell+1}} \asymp \frac {  L^{O_{\ell}(1)}}{(\log x)^{\ell+1}} .
\]
Therefore
\begin{align*}
\sum_{\substack{n\leq x \\ \omega^*(n)=k\\ P(n)>x^{1/L} }} \binom{r_1^*(n)}\ell 
   \ll_\ell  \sum_{\substack{m\leq x^{1-1/L} \\ \omega^*(m)=k-1}} \sum_{\substack{I\subset \mathcal L_m \\ |I|=\ell}}  
   \frac xm \cdot \frac {L^{O_\ell(1)}}{(\log x)^{\ell+1}}  
 \ll_\ell   \frac {x2^{\ell k} L^{O_\ell(1)}}{(\log x)^{\ell+1}}\sum_{\substack{m\leq x^{1-1/L} \\ \omega^*(m)=k-1\\ m=\square+\square}}  \frac 1m   
\end{align*}
since $|\mathcal L_m|= 2^k$. We  split the sum over $m$ into dyadic intervals $(M,2M]$ with $M=2^j, 1\leq j\leq J:=(1-\frac 1L) \frac{\log x}{\log 2}$. Then \eqref{eq: pi_N upperbd} gives
 \[
  \sum_{  j\leq J} \frac 1M \sum_{\substack{M=2^j<m\leq 2M \\ \omega^*(m)=k-1\\   m=\square+\square}}   1
  \ll   \sum_{\substack{  j\leq J \\  M=2^j}}    \frac {1}{M } \cdot \frac{M}{\log M} 
\frac{(\frac 12L+O(1))^{k}}{k!}   \ll  \frac{(\frac 12L)^{k}}{k!} \sum_{j\leq J} \frac 1j \ll \frac{(\frac 12L)^{k+O(1)}}{k!}.
\]
Inserting this above and then combining this with  
Lemma \ref{lem: for smooths}, the result follows.
\end{proof}

 \begin{corollary} \label{cor: UsefulUpperbounds} 
 We have 
\[
  \sum_{\substack{n\leq x \\ \omega^*(n)\geq K }} r_1(n)  \ll  \frac {xL^{O(1)}}{(\log x)^{ 1+\delta}}.
\]
 For any fixed integer $\ell\geq 2$ and integer $\tau, 0\leq \tau \ll \log\log x$ we have 
 \[
 \sum_{\substack{n\leq x \\ \omega^*(n)\leq K-\tau}} \binom{r_1(n)}\ell  \ll_\ell   \frac {x L^{O_\ell(1)}}{(\log x)^{1+\delta}}\cdot \frac {1}{(2^{\ell-1}\log 2)^{\tau}}.
 \]
  \end{corollary}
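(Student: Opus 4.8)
The plan is to derive both estimates from Lemma \ref{lem: binombounds4R1} by summing its per-$k$ bound over the relevant range of $k$, the whole point being a Stirling (Laplace-method) evaluation of the quantity $\frac{(cL)^K}{K!}$ that converts the factorial into a power of $\log x$. The arithmetic producing the exponent $\delta$ is the identity
\[
\frac{1+\log\log 2}{\log 2}=1-\delta ,
\]
together with $\lambda\log 2=1$ and $K=\lfloor\lambda L\rfloor$. Indeed, writing $K!\approx\sqrt{2\pi K}(K/e)^K$ and inserting $K=\lambda L$ gives $\frac{(cL)^K}{K!}\asymp\frac{1}{\sqrt{K}}\,(ce\log 2)^{\lambda L}=L^{O(1)}(\log x)^{\,\lambda\log c+1-\delta}$, since $(e\log 2)^{\lambda L}=(\log x)^{1-\delta}$. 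Taking $c=2^{\ell-1}$ we have $\lambda\log c=(\ell-1)\lambda\log 2=\ell-1$, so $\frac{(2^{\ell-1}L)^K}{K!}=L^{O(1)}(\log x)^{\,\ell-\delta}$; this single computation underlies both parts.

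For the first statement I take $\ell=1$ in Lemma \ref{lem: binombounds4R1}, so that $\sum_{\omega^*(n)=k}r_1(n)\ll\frac{xL^{O(1)}}{(\log x)^2}\frac{L^k}{k!}$. For $k\ge K$ the ratio of consecutive terms is $\frac{L}{k+1}\le\frac{L}{K+1}<\log 2<1$, so the sum over $k\ge K$ (within the uniform range of the lemma) is geometric and dominated by its $k=K$ term; by the displayed Stirling estimate with $c=1$ this term is $\ll\frac{xL^{O(1)}}{(\log x)^2}(\log x)^{1-\delta}=\frac{xL^{O(1)}}{(\log x)^{1+\delta}}$, as required. The only point needing separate care is the tail $k>CL$, beyond the uniform range of Lemma \ref{lem: binombounds4R1}: there I would pass from $r_1$ to $r_1^*$ via Lemma \ref{lem: gcd=1} (an error $O(\sqrt x\,L)$) and then bound $r_1^*(n)\le r_0^*(n)=2^{\omega^*(n)}$, so that $\sum_{\omega^*(n)>CL}r_1^*(n)\le\sum_{k>CL}2^k\pi_{\mathcal N}(x;k)$. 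Feeding in \eqref{eq: pi_N upperbd} gives a sum whose top term at $k=CL$ is $\asymp\frac{x}{\log x}(\log x)^{-C(\log C-1)}$; choosing $C$ with $C(\log C-1)>1$ makes this tail $\ll x/(\log x)^{2}$, hence negligible.

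For the second statement, fix $\ell\ge 2$ and observe that the summand $\frac{(2^{\ell-1}L)^k}{k!}$ from Lemma \ref{lem: binombounds4R1} now peaks near $k=2^{\ell-1}L$, which lies strictly to the right of $K$ because $2^{\ell-1}\ge 2>\lambda$. Consequently, on the whole summation range $k\le K-\tau$ the terms are increasing: passing from $k$ down to $k-1$ multiplies by $\frac{k}{2^{\ell-1}L}\le\frac{\lambda L}{2^{\ell-1}L}=\frac{1}{2^{\ell-1}\log 2}<1$, so the sum is $O_\ell(1)$ times its top term at $k=K-\tau$. I then write
\[
\frac{(2^{\ell-1}L)^{K-\tau}}{(K-\tau)!}=\frac{(2^{\ell-1}L)^{K}}{K!}\cdot\frac{K!/(K-\tau)!}{(2^{\ell-1}L)^\tau}\le\frac{(2^{\ell-1}L)^{K}}{K!}\cdot\Big(\frac{K}{2^{\ell-1}L}\Big)^\tau ,
\]
using $K!/(K-\tau)!\le K^\tau$, and bound the last factor by $(2^{\ell-1}\log 2)^{-\tau}$ via $K\le\lambda L$. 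Combining with the Stirling value $\frac{(2^{\ell-1}L)^{K}}{K!}=L^{O(1)}(\log x)^{\ell-\delta}$ and the prefactor $\frac{xL^{O_\ell(1)}}{(\log x)^{\ell+1}}$ of the lemma yields exactly $\frac{xL^{O_\ell(1)}}{(\log x)^{1+\delta}}\cdot\frac{1}{(2^{\ell-1}\log 2)^\tau}$.

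I expect the main obstacle to be bookkeeping rather than conceptual: one must verify that the boundary terms ($k=K$, resp.\ $k=K-\tau$) genuinely dominate, which for $\ell=1$ uses that $K$ sits to the right of the peak of $L^k/k!$ (ratio $<\log 2$) and for $\ell\ge2$ uses that $K$ sits to the left of the peak of $(2^{\ell-1}L)^k/k!$ (ratio $>1$)---this sign change is exactly where the hypothesis $\ell\ge 2$ enters. The secondary nuisance is the tail $k>CL$ in the first part, which falls outside the uniform validity of Lemma \ref{lem: binombounds4R1} and must be disposed of by the cruder bound $r_1^*\le r_0^*$ together with \eqref{eq: pi_N upperbd}; the super-geometric decay of this Poisson-type tail makes it harmless once $C$ is large.
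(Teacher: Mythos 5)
Your argument is correct and follows essentially the same route as the paper: feed the per-$k$ bounds of Lemma \ref{lem: binombounds4R1} into a geometric-series comparison (terms decreasing by a factor $<\log 2$ for $k\geq K$, increasing by a factor $>\log 4$ up to $k=K-\tau$ when $\ell\geq 2$) and evaluate $\frac{(2^{\ell-1}L)^K}{K!}=L^{O(1)}(\log x)^{\ell-\delta}$ by Stirling. Your extra treatment of the range $k>CL$ via $r_1^*(n)\leq 2^{\omega^*(n)}$ and \eqref{eq: pi_N upperbd} addresses a uniformity point the paper passes over silently, but it does not change the substance of the proof.
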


\begin{proof} By Lemma \ref{lem: binombounds4R1} we have
\[
 \sum_{\substack{n\leq x \\ \omega^*(n)\geq K}} r_1(n) \ll_\ell   \frac {x L^{O_\ell(1)}}{(\log x)^{2}} \sum_{k\geq K} \frac{L^{k}}{k!} \ll  \frac {x L^{O_\ell(1)}}{(\log x)^{2}}   \frac{L^{K}}{K!} = \frac {xL^{O(1)}}{(\log x)^{ 1+\delta}}
 \]
as the summands are decreasing by a factor $\leq \log 2$ for each successive $k\geq K$. Also
\[
\sum_{\substack{n\leq x \\ \omega^*(n)\leq K-\tau}} \binom{r_1(n)}\ell  \ll_\ell   \frac {x L^{O_\ell(1)}}{(\log x)^{\ell+1}}
\sum_{k\leq K-\tau}  \frac{(2^{\ell-1}L)^{k}}{k!}
\ll_\ell   \frac {x L^{O_\ell(1)}}{(\log x)^{\ell+1}} \frac{(2^{\ell-1}L)^{K-\tau}}{(K-\tau)!}
\]
as the summands are decreasing by a factor $\leq 1/\log 4$ as $k$ reduces with $k\leq K$ (starting at $K$, then $K-1$, past $K-\tau$ down to $1$), and
\[
\frac{(2^{\ell-1}L)^{K-\tau}}{(K-\tau)!} \leq \frac{(2^{\ell-1}L)^{K}}{K!} \frac 1{(2^{\ell-1}\log 2)^{\tau}}
\leq \frac { (\log x)^{\ell-1}}{(2^{\ell-1}\log 2)^{\tau}} \cdot \frac{L^{K}}{K!} = \frac { (\log x)^{\ell-\delta}}{(2^{\ell-1}\log 2)^{\tau}} L^{O(1)}
\]
and the result comes from combining the last two displayed equations.
\end{proof}

 \begin{proof}[Proof of upper bounds in Theorem \ref{thm: main}] Taking $\tau=0$ and $\ell=2$ in the second part of 
 Corollary \ref{cor: UsefulUpperbounds} we deduce that 
  \begin{equation} \label{eq:  Bounding}
 \sum_{k\geq K} \sum_{\substack{n\leq x \\ \omega^*(n)=k}} r_1(n) + 
  \sum_{k< K} \sum_{\substack{n\leq x \\ \omega^*(n)=k}} \binom{r_1(n)}2 \ll  \frac {xL^{O(1)}}{(\log x)^{ 1+\delta}}.
 \end{equation}
 Now $1\leq r_1(n)\leq 2\binom{r_1(n)}2$ if $r_1(n)\geq 2$, and so, by \eqref{eq:  Bounding}
 \[
  \#\{ n \le x \colon r_1(n) \ge 2\} = \sum_{r\geq 2} N_r(x) \leq  \sum_{r\geq 2}r N_r(x) \ll \frac {xL^{O(1)}}{(\log x)^{ 1+\delta}}.
 \]
 
 We now show that 
 \[
 \sum_{n\leq x} r_1(n) =\# \{ a^2+p^2\leq x: p \text{ is prime, } a\geq 1\} = \frac {\pi x}{2\log x}+O\bigg(\frac {xL}{(\log x)^2} \bigg)  .
 \]
There are $\ll \frac {x}{(\log x)^2}$ such pairs $(a,p)$
if $a>\sqrt{x}(1-\frac 1{\log x})$. For each integer $a, 1\leq a\leq \sqrt{x}(1-\frac 1{\log x})$, the number 
of primes $\leq (x-a^2)^{1/2}$ is
\[
\pi( (x-a^2)^{1/2}) = \frac{ (x-a^2)^{1/2}}{ \frac 12 \log (x-a^2)} + O\bigg(\frac{ x^{1/2} }{(\log x)^2}\bigg) =
\frac{ 2(x-a^2)^{1/2}}{  \log x} + O\bigg(\frac{ x^{1/2}L }{(\log x)^2}\bigg) 
\]
by the prime number theorem, since $x-a^2\gg x/\log x$. Therefore in total we have
\[
\sum_{a\leq \sqrt{x}(1-\frac 1{\log x}) } \frac{ 2(x-a^2)^{1/2}}{  \log x}  +O\bigg(\frac {xL}{(\log x)^2} \bigg) 
= \frac 2{\log x} \sum_{a\leq \sqrt{x}}  (x-a^2)^{1/2}   +O\bigg(\frac {xL}{(\log x)^2} \bigg)  .
\]
The   sum above equals $\#\{ a,b\geq 1: a^2+b^2\leq x\} =   \frac \pi 4 x+O(\sqrt{x})$, by counting lattice points in the circle, and the claim follows. 

We deduce from this that 
\begin{equation} \label{eq: sumidff}
\frac {\pi  }{2 } \cdot \frac {x}{\log x} -N_1(x)=    \sum_{r\geq 2} r N_r(x)+ O\bigg(\frac {xL}{(\log x)^2} \bigg)   \ll \frac {xL^{O(1)}}{(\log x)^{ 1+\delta}}.
\end{equation}
We also observe that the sum in the middle term is over positive quantities,so the second result of Theorem 1 then follows from the lower bound implicit in the first part of Theorem 1.
  \end{proof}

  \begin{proof}[Completion of the proof of  Theorem \ref{thm: main}, assuming Theorem \ref{thm: r1(n)choose 2}.]
 
 By Corollary \ref{cor: UsefulUpperbounds}, for any integer $\tau, 0\leq \tau \ll \log\log x$ we have 
 \[
 \sum_{\substack{n\leq x \\ \omega^*(n)\leq K-\tau}} \binom{r_1(n)}3  \ll_\ell   \frac {x L^{O_\ell(1)}}{(\log x)^{1+\delta}}\cdot \frac {1}{(2\log 4)^{\tau}},
 \]
 while Theorem \ref{thm: r1(n)choose 2} implies that 
 \[
 \sum_{\substack{n\leq x \\ \omega^*(n)=K-\tau}} \binom{r_1(n)}2 \gg   \frac {x L^{O(1)}}{(\log x)^{3}}\frac{(2 L)^{K-\tau}}{(K-\tau)!}
 \gg \frac {xL^{O(1)}}{(\log x)^{ 1+\delta}(\log 4)^{\tau}}.
\]
 Now if $r\geq 3$ then $ \binom{r}2\leq 3\binom{r}3$ and so
\begin{align*}
N_2(x)  &\geq \#\{n\leq x: r_1(n)=2 \text{ and } \omega^*(n)=K-\tau\}  \\
& \geq  \sum_{\substack{n\leq x \\ \omega^*(n)=K-\tau} } \binom{r_1(n)}2  -3   \sum_{\substack{n\leq x \\ \omega^*(n)\leq  K-\tau  }}  \binom{r_1(n)}3 \\
& \gg \frac {xL^{O(1)}}{(\log x)^{ 1+\delta}(\log 4)^{\tau}} \bigg( 1 - O\bigg( \frac{ L^{O(1)}}{ 2^{\tau}} \bigg)\bigg)\\
 &\gg  \frac {xL^{O(1)}}{(\log x)^{ 1+\delta}(\log 4)^{\tau}} =\frac {xL^{O(1)}}{(\log x)^{ 1+\delta} }
 \end{align*}
 provided $\tau=A \log\log\log x$ with $A$ sufficiently large. 
   \end{proof}
       
  \begin{proof}[Proof of  Theorem \ref{thm: newmain}, assuming Theorem \ref{thm: r1(n)choose 2}]
   Now $N_0(x)=\sum_{r\geq 1} N_r(x)$ and so, by \eqref{eq: sumidff},
 \[
 \frac {\pi  }{2 } \cdot \frac {x}{\log x} -N_0(x)=   \sum_{r\geq 2} (r-1) N_r(x)+ O\bigg(\frac {xL}{(\log x)^2} \bigg)
  \ll \frac {xL^{O(1)}}{(\log x)^{ 1+\delta}}.
 \]
 Moreover, since each $N_r(x)\geq 0$ by definition, this implies that 
  \[
 \frac {\pi  }{2 } \cdot \frac {x}{\log x} -N_0(x)\geq N_2(x) + O\bigg(\frac {xL}{(\log x)^2}\bigg) \geq \frac {xL^{O(1)}}{(\log x)^{ 1+\delta} }
 \]
 by Theorem \ref{thm: main}.
  \end{proof}

  \begin{corollary} \label{cor: maincontrib}
  The main contribution to the sum in \eqref{eq: Daniel}  is given by those integers $n$ with 
  $\omega(n)\sim 2\log\log x$ and $ r_1(n)= (\log x)^{\log 4-1+o(1)} $, that is,
\[
  \sum_{\substack{n\leq x \\ |\omega(n)-2L|\leq C(L\log L)^{1/2}\\ r_1(n)= (\log x)^{\log 4-1+o(1)} }}  \binom{r_1(n)}2\sim \sum_{n\leq x}  \binom{r_1(n)}2.
\]
  \end{corollary}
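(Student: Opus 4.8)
The plan is to deduce the corollary from Daniel's asymptotic \eqref{eq: Daniel}, which gives $S:=\sum_{n\le x}\binom{r_1(n)}2\sim \tfrac{9}{8}\cdot\frac{x}{\log x}$, together with the moment estimates of Lemma \ref{lem: binombounds4R1}. Since every term $\binom{r_1(n)}2$ is non-negative, it suffices to show that the sum over $n$ lying outside the two stated ranges is $o(x/\log x)=o(S)$; the restricted sum is then forced to be $\sim S$, and the matching lower bound is automatic. I will work throughout with the number of odd prime factors $\omega^*(n)$ in place of $\omega(n)$, the two differing by at most $1$, which is negligible against the window width $\gg\sqrt{L\log L}$.

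First I discard $n$ whose number of prime factors is far from $2L$. For $C\sqrt{L\log L}<|k-2L|$ with $k\le C_0L$ (where $C_0$ is an arbitrarily large fixed constant, still within the range $k\ll L$ of Lemma \ref{lem: binombounds4R1}), the case $\ell=2$ of that lemma gives $\sum_{\omega^*(n)=k}\binom{r_1(n)}2\ll \frac{xL^{O(1)}}{(\log x)^3}\frac{(2L)^k}{k!}$. The weights $\frac{(2L)^k}{k!}$ are those of a Poisson law of mean $2L$, so by a Chernoff bound $\sum_{|k-2L|>C\sqrt{L\log L}}\frac{(2L)^k}{k!}\ll e^{2L}L^{-C^2/4}=(\log x)^2L^{-C^2/4}$, whence this portion contributes $\ll xL^{O(1)-C^2/4}/\log x=o(x/\log x)$ once $C$ exceeds the fixed exponent hidden in $L^{O(1)}$. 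For the remaining very large $k>C_0L$ I use the crude bound $\binom{r_1(n)}2\le\binom{r_0^*(n)}2\ll 4^k$ together with \eqref{eq: pi_N upperbd}; since $4^k(\tfrac{1}{2}L)^k=(2L)^k$, this again produces the Poisson weight $\frac{(2L)^k}{k!}$ (now multiplied only by $x/\log x$), whose tail beyond $C_0L$ is super-exponentially small and gives $\ll x(\log x)^{-A}$ for $C_0$ large. Hence the total contribution of $|\omega^*(n)-2L|>C\sqrt{L\log L}$ is $o(x/\log x)$.

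Within the window $|\omega^*(n)-2L|\le C\sqrt{L\log L}$ I then restrict $r_1(n)$, the point being that $2^k\approx 2^{2L}=(\log x)^{\log 4}$ pins both thresholds at $(\log x)^{\log 4-1}$. If $r_1(n)>T:=(\log x)^{\log 4-1+\varepsilon}$ then $\binom{r_1(n)}2\le \frac{3}{r_1(n)-2}\binom{r_1(n)}3\ll T^{-1}\binom{r_1(n)}3$, and the case $\ell=3$ of Lemma \ref{lem: binombounds4R1} bounds $\sum_{\omega^*(n)=k}\binom{r_1(n)}3\ll\frac{xL^{O(1)}}{(\log x)^4}\frac{(4L)^k}{k!}$; as $\frac{(4L)^k}{k!}$ is at most $(\log x)^{2+\log 4+o(1)}$ throughout the window (its maximum sitting at the upper edge), this part is $\ll xL^{O(1)}(\log x)^{-1-\varepsilon+o(1)}$. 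Symmetrically, if $r_1(n)<T':=(\log x)^{\log 4-1-\varepsilon}$ then $\binom{r_1(n)}2\le\frac{T'}{2}\,r_1(n)$, and the case $\ell=1$ gives $\sum_{\omega^*(n)=k}r_1(n)\ll\frac{xL^{O(1)}}{(\log x)^2}\frac{L^k}{k!}$; since $\frac{L^k}{k!}$ is at most $(\log x)^{2-\log 4+o(1)}$ throughout the window, this yields $\ll xL^{O(1)}(\log x)^{-1-\varepsilon+o(1)}$. Both are $o(x/\log x)$ for every fixed $\varepsilon>0$.

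Adding the three estimates, the sum over $n$ with $|\omega(n)-2L|\le C\sqrt{L\log L}$ and $(\log x)^{\log 4-1-\varepsilon}\le r_1(n)\le(\log x)^{\log 4-1+\varepsilon}$ differs from $S$ by $o(S)$ and so is $\sim S$; letting $\varepsilon\to0$ gives $r_1(n)=(\log x)^{\log 4-1+o(1)}$ on the dominant set, which is the assertion of the corollary. I expect the main obstacle to be making the $\omega$-truncation uniform: Lemma \ref{lem: binombounds4R1} carries unspecified $L^{O(1)}$ factors, so one must take the window width $C\sqrt{L\log L}$ with $C$ large enough that the Poisson saving $L^{-C^2/4}$ dominates them, and one must separately patch in the crude $4^k$ bound for $k$ beyond any fixed multiple of $L$, checking there that the super-exponential decay of the Poisson tail overcomes both the growth of $4^k$ and the loss of the sieve factor $(\log x)^{-2}$. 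By contrast the $r_1$-truncation is routine once one notices that the factor $2^{(\ell-2)k}$ at $k\approx 2L$ shifts the effective threshold to exactly $(\log x)^{\log 4-1}$ in both directions.
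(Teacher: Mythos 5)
Your argument is correct and follows essentially the same route as the paper: the same three-way decomposition (discard $|\omega(n)-2L|>C(L\log L)^{1/2}$ via the second-moment bound of Lemma \ref{lem: binombounds4R1}, discard $r_1(n)>(\log x)^{\log 4-1+\epsilon}$ via the third moment using $\binom{r}{2}\ll T^{-1}\binom{r}{3}$, and discard $r_1(n)<(\log x)^{\log 4-1-\epsilon}$ by pulling the threshold out of $\binom{r}{2}$), with Daniel's asymptotic \eqref{eq: Daniel} supplying the main term. The only cosmetic difference is in the small-$r_1$ case, where you extract one power of the threshold and invoke the first moment ($\ell=1$), whereas the paper extracts two powers and uses the count $\pi_{\mathcal N}(x;k)$ from \eqref{eq: pi_N estimate}; both yield the same $o(x/\log x)$ bound, and your extra care with the very large-$k$ tail (where you should note that $r_1(n)\le r_0(n)$ rather than $r_0^*(n)$, the discrepancy being controlled by Lemma \ref{lem: gcd=1}) only makes explicit what the paper leaves implicit.
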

  
  \begin{proof}
  Theorem \ref{thm: r1(n)choose 2} implies that there exists a constant $C>0$ such that
 \[
 \sum_{\substack{n\leq x \\ |\omega(n)-2L|>C(L\log L)^{1/2} }} \binom{r_1(n)}2     = o\bigg(\frac x{\log x} \bigg).
 \]
  Lemma \ref{lem: binombounds4R1}  with $\ell=3$ implies that 
 \[
 \sum_{\substack{n\leq x \\ |\omega(n)-2L|\leq C(L\log L)^{1/2}\\ r_1(n)> (\log x)^{\log 4-1+\epsilon} }} \binom{r_1(n)}2 \ll \frac 1{(\log x)^{\log 4-1+\epsilon}} \sum_{\substack{n\leq x \\ |\omega(n)-2L|\leq C(L\log L)^{1/2}}} \binom{r_1(n)}3 = o\bigg(\frac x{\log x} \bigg).
  \]
 Finally by \eqref{eq: pi_N estimate} we have
 \begin{align*}
 \sum_{\substack{n\leq x \\ |\omega(n)-2L|\leq C(L\log L)^{1/2}\\ r_1(n)< (\log x)^{\log 4-1-\epsilon} }} \binom{r_1(n)}2 
& \leq  (\log x)^{2\log 4-2-2\epsilon} \sum_{k:\  |k-2L|\leq C(L\log L)^{1/2}}\pi_\mathcal N(x;k) \\
 &\ll  \frac{(\log x)^{2\log 4-2-\epsilon}}{4^{2L}} \sum_{k:\  |k-2L|\leq C(L\log L)^{1/2}}  \frac x{\log x} \frac{(2L)^{2L}}{(2L)!}
 = o\bigg(\frac x{\log x} \bigg).
 \end{align*}
 Inserting these bounds into \eqref{eq: Daniel} gives the result.
  \end{proof}

  It remains for us to prove the lower bounds in  Theorem \ref{thm: r1(n)choose 2}.
 \section{Lower bounds on the sum of $\binom{r_1(n)}2$}
 Notice that
 \[
\sum_{\substack{h\leq x/m \\ p|h \implies p>Y}} \binom{r_1^*(mh)}2 = 
\sum_{\substack{h\leq x/m \\ p|h \implies p>Y }} \sum_{a,b:\  a^2+b^2=h} \sum_{\substack{1\leq i<j\leq r_0^*(m) \\ au_i+bv_i, au_j+bv_j \text{ both prime}}} 1
 \]
 \[
 \geq \sum_{1\leq i<j\leq r_0^*(m)}  \sum_{\substack{a,b\leq \sqrt{x/2m}\\   p|a^2+b^2\implies p>Y \\ au_i+bv_i, au_j+bv_j \text{ both prime}}} 1.
 \]
 Now if $x=Y^u$ with $u\in \mathbb Z$ and $n\leq x$ then $n$ has $\leq [u]$ prime factors $>Y$, and so there are $\leq 2^u$ ways of writing $n=mh$ where $p|h \implies p>Y$. Therefore
 \[
 \sum_{\substack{n\leq x \\ \omega^*(n)=k} } \binom{r_1(n)}2 \geq 2^{-u} \sum_{\substack{m\leq x \\ k-u\leq \omega^*(m)\leq k\\ m=\square+\square} } \sum_{\substack{h\leq x/m \\ p|h \implies p>Y}} \binom{r_1^*(mh)}2
 \]
 \[
\geq  2^{-u} \sum_{\substack{m\leq x \\ k-u\leq \omega^*(m)\leq k} }  \sum_{1\leq i<j\leq r_0^*(m)}  \sum_{\substack{a,b\leq \sqrt{x/2m}\\   (a^2+b^2,Q(Y))=1 \\ au_i+bv_i, au_j+bv_j \text{ both prime}}} 1 ,
 \]
 where $Q(Y):=\prod_{p\leq Y} p$.  We can always write 
\[
u_i+iv_i= (r+is)(g-ih) \text{ and } u_j+iv_j=(r+is)(g+ih) \text{ where } m =(r^2+s^2)(g^2+h^2)
\]
 for some (unique) positive integers $g,h,r,s$, and it is convenient to assume that $(g^2+h^2,r^2+s^2)=1$   so the above is
\[
\geq 2^{-u} \sum_{i+j\in [k-u,k]} \sum_{\substack{g,h \\   \omega^*(g^2+h^2)=i} } \sum_{\substack{r,s\\ m=(g^2+h^2)(r^2+s^2)\leq x \\   \omega^*(r^2+s^2)=j \\
(g^2+h^2,r^2+s^2)=1}}
 \sum_{\substack{a,b\leq \sqrt{x/2m}\\   (a^2+b^2,Q(Y))=1 \\ p,q \text{ both prime}}} 1
\]
\[
\text{ with }  p = a(rg+sh)+b(sg-rh)  \text{ and } q = a(rg-sh)+b(sg+rh).
\]

  We take  
 $R,G$ of the form $R=2^{i/2}x^\eta, G=2^{j/2}x^\eta$ where $1\leq 2^i,2^j\leq x^\eta$ and $\eta>0$ is small,
 to obtain a lower bound given by $\gg (\log x)^2$ sums of the form
\[
 \geq  2^{-u}\sum_{\ell+\ell'\in [k-u,k]}   \sum_{\substack{(g,h)\in \mathcal S_{1,\ell} ([0,\frac \pi 2],G) \\ n:=2gh(g^2+h^2)}}  
  \sum_{\substack{(r,s)\in \mathcal S_{n,\ell'} ([0,\frac \pi 2], R) }}  
  \sum_{\substack { a,b\leq \sqrt{x/2}/4RG \\   (a^2+b^2,Q(Y))=1 \\ p,q\text{ both prime}}} 1,
  \]
  and so, if $u\ll \log\log\log x$ then 
 \begin{equation} \label{eq: setup}
 \sum_{\substack{n\leq x \\ \omega(n)=k} } \binom{r_1(n)}2 \geq (\log x)^2L^{O(1)} \min_{x^\eta<R,G\leq x^{2\eta}} \sum_{\ell+\ell'\in [k-u,k]}   \sum_{\substack{(g,h)\in \mathcal S_{1,\ell} ([0,\frac \pi 2],G) \\ n:=2gh(g^2+h^2)\\ (r,s)\in \mathcal S_{n,\ell'} ([0,\frac \pi 2], R}}  
  \sum_{\substack { a,b\leq \sqrt{x/2}/4RG \\   (a^2+b^2,Q(Y))=1 \\ p,q\text{ both prime}}} 1.
 \end{equation}

 \section{The final sum; conditions on the primes}
 
Now $|r|,|s|\leq 2R$ and $|g|,|h|\leq 2G$  and so, as
\[
u_1=rg+sh, \ v_1=sg-rh, \ u_2=rg-sh, \ v_2=sg+rh, 
\]
we have $|u_i|,|v_i|\leq 8RG$.   

We also have $p = au_1+bv_1  \text{ and } q = au_2+bv_2$ which invert to give
 \[
 \Delta a = pv_2-qv_1,\ \Delta b = qu_1-pu_2  \text{ and } \Delta:=u_1v_2-u_2v_1=2gh(r^2+s^2) .
 \]
 In particular, we have $\Delta^2(a^2+b^2)=F(p,q)$, where 
 \[
 F(x,y):= mx^2-2\xi xy+my^2  \text{ and }  \xi:=u_1u_2+v_1v_2=(r^2+s^2)(g^2-h^2).
 \]
 The final sum above is
 \[
  \sum_{\substack { a,b\leq  \sqrt{x/2}/4RG \\   (a^2+b^2,Q(Y))=1 \\ p,q\text{ both prime}}} 1 =  
  \sum_{\substack { a,b\leq  \sqrt{x/2}/4RG  \\ p,q\text{ both prime} \\   ( \tfrac{F(p,q)}{\Delta^2} ,Q(Y))=1 }} 1,
 \]
 so we need to gain some understanding of what integers divide $F(p,q)$.

\subsection{Congruence  conditions on $p$ and $q\pmod \Delta$}
To guarantee that $a$ and $b$ are integers, we give the following lemma  without proof:

 \begin{lemma} \label{lem: ell} With the notation as above, 
$  a = \frac{pv_2-qv_1}{\Delta}$ and $b = \frac{qu_1-pu_2}{\Delta}$  are integers if and only if  $q\equiv \ell p  \pmod \Delta$ where, defining
 $i$ by $s\equiv ir \pmod{(r^2+s^2)^2}$,
 \begin{align*}
 (gi-h) \ell &\equiv (h+ig) \pmod{r^2+s^2} \\
 \  \ell &\equiv 1 \pmod {h(2,h)},\\ 
 \text{ and   }\qquad  \ell &\equiv -1 \pmod {g(2,g)}. 
\end{align*}
We call this value $\ell^{(g,h,r,s)} \pmod \Delta$.
Note that $\Delta = (r^2+s^2) \cdot h(2,h) \cdot g(2,g)$ and these three moduli are pairwise coprime.
 \end{lemma}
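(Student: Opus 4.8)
The plan is to reinterpret the integrality of $a$ and $b$ as a lattice-membership condition and then pin down the resulting congruence by the Chinese Remainder Theorem. Since $\Delta = u_1v_2 - u_2v_1 \neq 0$, the linear system $p = au_1+bv_1$, $q = au_2+bv_2$ has the unique rational solution displayed in the statement, so $a$ and $b$ are both integers if and only if the pair $(p,q)$ lies in the lattice $\Lambda := \mathbb{Z}(u_1,u_2) + \mathbb{Z}(v_1,v_2)$. First I would record that $\Lambda$ contains $\Delta\,\mathbb{Z}^2$: indeed $v_2(u_1,u_2) - u_2(v_1,v_2) = (\Delta,0)$ and $-v_1(u_1,u_2) + u_1(v_1,v_2) = (0,\Delta)$. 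Hence membership in $\Lambda$ depends only on $(p,q) \bmod \Delta$, and $\Lambda/\Delta\mathbb{Z}^2$ is a subgroup of order $\Delta$ in $(\mathbb{Z}/\Delta\mathbb{Z})^2$.

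Next I would show this subgroup is exactly a graph $\{(p,q): q \equiv \ell p\}$ for a unique $\ell \bmod \Delta$. Using the identities $g u_1 - h v_1 = r(g^2+h^2)$ and $h u_1 + g v_1 = s(g^2+h^2)$ together with $(r,s)=1$, the integer $g^2+h^2$ lies in the ideal $(u_1,v_1)$; and since $(r^2+s^2,\, 2gh(g^2+h^2)) = 1$ one checks $(g^2+h^2,\Delta) = 1$, whence $\gcd(u_1,v_1,\Delta)=1$. Thus the first-coordinate projection of $\Lambda$ is onto $\mathbb{Z}/\Delta\mathbb{Z}$, which forces $\Lambda \bmod \Delta = \{(p,\ell p)\}$ for a unique $\ell$, characterised by $u_2 \equiv \ell u_1$ and $v_2 \equiv \ell v_1 \pmod{\Delta}$.

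It then remains to identify $\ell$ via CRT, for which I would first confirm that $\Delta$ factors as $(r^2+s^2)\cdot h(2,h)\cdot g(2,g)$ into pairwise coprime parts. Here $g^2+h^2$ is odd (as $(g^2+h^2,2)=1$), so exactly one of $g,h$ is even and $h(2,h)g(2,g) = 2gh$, giving the factorisation of $\Delta = 2gh(r^2+s^2)$; coprimality of the three factors follows from $(g,h)=1$ and from $r^2+s^2$ being coprime to $2,g,h$. Modulo $h(2,h)$ one has $u_2 - u_1 = -2sh \equiv 0$ and $v_2 - v_1 = 2rh \equiv 0$, so $\ell \equiv 1$; modulo $g(2,g)$ one has $u_2 + u_1 = 2rg \equiv 0$ and $v_2 + v_1 = 2sg \equiv 0$, so $\ell \equiv -1$. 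Finally, modulo $r^2+s^2$ I would substitute $s \equiv ir$ (legitimate since $(r,r^2+s^2)=(r,s^2)=1$, and then $i^2 \equiv -1$), obtaining $u_1 \equiv r(g+ih)$ and $u_2 \equiv r(g-ih)$; as $r$ and $g+ih$ are invertible (the latter has norm $g^2+h^2$ coprime to $r^2+s^2$), this gives $\ell \equiv (g-ih)/(g+ih)$, which rearranges to $(gi-h)\ell \equiv h+ig$ after clearing denominators. Combining the three congruences by CRT yields $\ell^{(g,h,r,s)}$.

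The main obstacle I expect is the bookkeeping modulo $r^2+s^2$: one must justify working with a square root $i$ of $-1$ there (only $i \bmod (r^2+s^2)$ is actually needed for this congruence, even though it is defined to higher precision), verify the invertibility of $r$ and of $g\pm ih$, and check that the conditions coming from $a$ and from $b$ collapse to the single relation $(gi-h)\ell \equiv h+ig$ — which hinges on the identity $(g-ih)(gi-h) = i(g^2+h^2) = (h+ig)(g+ih)$. Everything else is routine congruence algebra once the coprime factorisation of $\Delta$ is in place.
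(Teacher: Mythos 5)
The paper states this lemma explicitly ``without proof,'' so there is no argument of the authors' to compare yours against; your proposal supplies a correct and complete verification. The lattice reformulation, the surjectivity of the first-coordinate projection via $gu_1-hv_1=r(g^2+h^2)$, $hu_1+gv_1=s(g^2+h^2)$, and the CRT computation of $\ell$ modulo the three coprime factors of $\Delta$ (including the identity $(gi-h)(g-ih)=i(g^2+h^2)=(h+ig)(g+ih)$ reconciling the $u$- and $v$-conditions) all check out, and they are consistent with the decomposition of $\Delta$ the authors use later in the proof of Lemma \ref{lem: sortv(.)}.
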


 We have $F(p,q)\equiv F(1,\ell)p^2 \pmod {\Delta^2}$.
Squarefree $d$ divides $a^2+b^2$ if and only if  
$q\equiv \ell_d p \pmod {d \Delta}$ where $d \Delta^2$ divides $F(1,\ell_d)$.
Moreover $\ell_d\equiv \ell \pmod \Delta$.

 \begin{lemma} \label{lem: sortv(.)}With the notation as above, for squarefree $d$ define $\nu(d)=\# \mathcal N(d)$ where 
\[
\mathcal N(d):= \{ \lambda \pmod {d\Delta}: \lambda\equiv \ell \pmod \Delta, (\lambda,d)=1 \text{ and } d| \tfrac{F(1,\lambda)}{\Delta^2} \} .
\]  
Then $\nu(\cdot)$ is a multiplicative function on the squarefree $d$, with 
 \[
\nu(p):= \begin{cases}
 1+(\frac {-4}p) &\text{ if } p\nmid m;\\
1 &\text{ if } p|r^2+s^2\\
0 &\text{ if } p|g^2+h^2
\end{cases}
\]
 \end{lemma}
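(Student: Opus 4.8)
The plan is to establish the multiplicativity of $\nu(\cdot)$ and then compute $\nu(p)$ prime by prime. For multiplicativity, suppose $d=d_1 d_2$ with $(d_1,d_2)=1$ and both coprime to $\Delta$. By the Chinese Remainder Theorem, specifying $\lambda \pmod{d\Delta}$ with $\lambda \equiv \ell \pmod \Delta$ is equivalent to specifying $\lambda \pmod{d_1}$ and $\lambda \pmod{d_2}$ (together with the fixed residue $\pmod \Delta$). The two local conditions $(\lambda,d)=1$ and $d \mid F(1,\lambda)/\Delta^2$ split as $(\lambda,d_i)=1$ and $d_i \mid F(1,\lambda)/\Delta^2$ for $i=1,2$, since $d_1,d_2$ are coprime. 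Hence $\mathcal N(d)$ is in bijection with $\mathcal N(d_1)\times \mathcal N(d_2)$ via CRT, giving $\nu(d)=\nu(d_1)\nu(d_2)$. The one point requiring care is the coprimality of $d$ with $\Delta$: I would note that any prime $p \mid \Delta$ is treated by the separate cases below (primes dividing $r^2+s^2$, $g$, or $h$), and that for the generic case $p\nmid m$ we automatically have $p\nmid \Delta$ since $\Delta^2 \mid m \cdot (\text{stuff})$ forces $p \mid m$ whenever $p\mid \Delta$; this keeps the multiplicative structure consistent across all primes.

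Next I would compute $\nu(p)$ at a single prime $p$ by counting solutions $\lambda \pmod{p}$ (lifting the fixed residue $\pmod \Delta$) to the condition $p \mid F(1,\lambda)/\Delta^2$ with $(\lambda,p)=1$. Recall $F(1,\lambda)=m-2\xi\lambda+m\lambda^2$ where $m=(r^2+s^2)(g^2+h^2)$ and $\xi=(r^2+s^2)(g^2-h^2)$. For the generic case $p\nmid m$, I would regard $p \mid F(1,\lambda)$ as a quadratic congruence in $\lambda$ with leading coefficient $m\not\equiv 0$; its discriminant is $4\xi^2-4m^2 = 4(r^2+s^2)^2\big((g^2-h^2)^2-(g^2+h^2)^2\big) = -16(r^2+s^2)^2 g^2 h^2$. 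Thus the number of roots is $1+\left(\frac{-16(r^2+s^2)^2 g^2h^2}{p}\right) = 1 + \left(\frac{-4}{p}\right)$ when $p\nmid (r^2+s^2)gh$ (the square factors drop out of the Legendre symbol), which is precisely the claimed value; I would also check that neither root can be $\equiv 0$, so the condition $(\lambda,p)=1$ is automatic, and confirm that the $\Delta^2$ in the denominator only shifts which residues lift correctly without changing the count.

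For the two special cases I would argue directly. If $p \mid r^2+s^2$ then both $m$ and $\xi$ are $\equiv 0 \pmod p$, so $F(1,\lambda) \equiv 0 \pmod p$ holds for \emph{every} $\lambda$; the genuine constraint here comes instead from the precise power of $p$ dividing $F(1,\lambda)/\Delta^2$ together with the residue $\lambda\equiv \ell \pmod \Delta$ pinned down by Lemma \ref{lem: ell}, and I expect exactly one admissible lift, giving $\nu(p)=1$. If $p \mid g^2+h^2$ then $p\mid m$ but the structure of $F$ forces $p^2 \| \Delta$ through the factor $\Delta=2gh(r^2+s^2)$, so that $\Delta^2$ already absorbs the full power of $p$ available in $F(1,\lambda)$, leaving no room for an extra factor of $p$; hence no admissible $\lambda$ exists and $\nu(p)=0$. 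The main obstacle will be this last bookkeeping: correctly tracking the exact $p$-adic valuation of $\Delta^2$ versus that of $F(1,\lambda)$ in the ramified cases $p \mid r^2+s^2$ and $p \mid g^2+h^2$, since the generic quadratic-root count no longer applies and one must instead analyze the factorization of the Gaussian integers $r+is$ and $g+ih$ at the prime above $p$. I would handle this by passing to $\mathbb Z[i]$, writing $p = \pi\bar\pi$, and matching valuations of $F(1,\lambda)$ against those of $\Delta^2$ directly.
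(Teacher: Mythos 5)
Your multiplicativity argument and your generic root count (discriminant $4\xi^2-4m^2=-4\Delta^2$, giving $1+(\frac{-1}p)$ roots) match the paper's proof where they overlap, but the non-generic cases contain a genuine gap and an error. The most serious issue is that the primes $p\mid 2gh$ with $p\nmid m$ are not covered at all. Your claim that ``$p\mid\Delta$ forces $p\mid m$'' is false: if $p\mid g$ (odd $p$) then $p\nmid g^2+h^2$ (since $(g,h)=1$) and $p\nmid r^2+s^2$ (the three factors of $\Delta$ in Lemma \ref{lem: ell} are pairwise coprime), so $p\mid\Delta$ but $p\nmid m$ and $p\nmid\xi$. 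Your root count explicitly requires $p\nmid(r^2+s^2)gh$, and your two ``special cases'' treat only $p\mid r^2+s^2$ and $p\mid g^2+h^2$, so the primes dividing $gh$ fall through the cracks — yet the lemma asserts $\nu(p)=1+(\frac{-4}p)$ for exactly these primes as part of the case $p\nmid m$. Since $p\mid\Delta$ here, the condition $p\Delta^2\mid F(1,\lambda)$ involves $p^{2v_p(\Delta)+1}$ and the naive count mod $p$ (where the discriminant vanishes) gives the wrong answer; the paper writes $\lambda=\ell+kH$ with $H=g(2,g)$ or $h(2,h)$, uses $\ell\equiv\mp1$ from Lemma \ref{lem: ell}, and reduces the condition to $(hk/2)^2+1\equiv 0\pmod p$, which has $1+(\frac{-4}p)$ solutions in $k$.

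Second, your explanation of $\nu(p)=0$ for $p\mid g^2+h^2$ rests on a false premise: such a $p$ does \emph{not} divide $\Delta=2gh(r^2+s^2)$ (again by $(g,h)=1$ and $(g^2+h^2,r^2+s^2)=1$), so there is no ``absorption by $\Delta^2$.'' The actual mechanism is the coprimality condition you dismissed as automatic: here $p\mid m$ but $p\nmid\xi$, so $F(1,\lambda)\equiv -2\xi\lambda\pmod p$ forces $\lambda\equiv 0\pmod p$, which is excluded by $(\lambda,d)=1$. Finally, for $p\mid r^2+s^2$ you only say you ``expect'' one admissible lift; the paper's expansion of $F(1,\ell+kH)$ modulo $pH^2$ with $H=r^2+s^2$ reduces the condition to $k\equiv 0\pmod p$, which is the actual proof. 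The Gaussian-integer valuation matching you propose could be made rigorous, but as written the ramified cases are either absent or justified by incorrect reasoning.
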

 
 \begin{proof} Now $p| \tfrac{F(1,\lambda)}{\Delta^2}$ if and only if
 $p\Delta^2|F(1,\lambda)=m\lambda^2-2\xi \lambda+m$. This is a quadratic polynomial of discriminant 
 $4\xi^2-4m^2=-4\Delta^2$. Therefore:
 
-- If prime $p$ does not divide $2\Delta m$ then $\nu(p)=1+(\frac {-1}p)$. 
 
-- If prime $p$   divides $g^2+h^2$ (which divides $m$ but not $\xi\Delta$) then $\lambda\equiv 0 \pmod p$.
 
 \noindent Now suppose prime $p$   divides  $\Delta$ then $p$ divides exactly one of $H=r^2+s^2, g(2,g)$ or $h(2,h)$.
 Then we can write $\lambda=\ell+k H$ for some integer $k$, and so
 \[
 m\lambda^2-2\xi \lambda+m\equiv (m\ell^2-2\xi\ell +m) +(m \ell -\xi)  2kH +mk^2H^2 \pmod{pH^2}.
 \]
 
-- If $H=r^2+s^2$ then the congruence becomes  $k\equiv 0 \pmod p$
 
 -- If $H=(2,g)g$ write $\ell=-1$ and  the congruence becomes 
 $(hk/2)^2+1\equiv 0 \pmod p$ if $p\ne 2$, and $k\equiv 1 \pmod 2$ if $p=2$.

 -- If $H=(2,h)h$ write $\ell=1$ and  the congruence becomes 
$(gk/2)^2+1\equiv 0 \pmod p$
 if $p\ne 2$, and $k\equiv 1 \pmod 2$ if $p=2$.

Therefore $\nu(p)=1+(\frac {-4}p)$ if $p|2gh$.
\end{proof}

\subsection{The size of $p$ and $q$}
Let $X= \sqrt{x}/32$.
Suppose that $g+ih=\sqrt{g^2+h^2}\cdot e(\theta)$ with $\theta\in [\frac \pi 8, \frac {3\pi} 8]$ and so
$\sin(2\theta)\geq \frac 1{\sqrt{2}}$ which gives 
$2gh=  (g^2+h^2)\sin(2\theta)\geq 2G^2\sin(2\theta)\geq \sqrt{2}G^2$. Now $r^2+s^2\geq 2R^2$
and so $\Delta\geq 2 \sqrt{2}G^2R^2$ which gives
\[
2 \sqrt{2}G^2R^2 \max\{ |a|,|b|\} \leq \Delta \max\{ |a|,|b|\} \leq 16RGX=RG\sqrt{x}/2,
\] 
that is, $|a|,|b|\leq   \sqrt{x/2}/4RG$, as desired.
 Therefore we have
\[
\sum_{\substack { a,b\leq  \sqrt{x/2}/4RG  \\ p,q\text{ both prime} \\   ( \tfrac{F(p,q)}{\Delta^2} ,Q(Y))=1 }} 1
\geq \sum_{\substack {\text{Primes }  p,q\leq X  \\  q\equiv \ell^{(g,h,r,s)} p  \pmod \Delta \\  ( \tfrac{F(p,q)}{\Delta^2} ,Q(Y))=1 }} 1
 \]
 
 For a given $\ell$-value define
 $\mathcal P_{\ell, \Delta}(X):= \{ \text{Primes } p,q\leq X: q\equiv \ell p  \pmod \Delta\}$, and 
  \[
 \mathcal A=\mathcal A_{(g,h,r,s)}:=\{ \tfrac{F(p,q)}{\Delta^2}:\  p,q\in \mathcal P_{\ell, \Delta}(X)\} ,
 \]
 where $\ell= \ell^{(g,h,r,s)}$,
 counting with multiplicity. We define $\mathcal A_d=\{  a\in \mathcal A: d|a\}$ for squarefree $d$, so that
 \[
\mathcal A_d= \bigcup_{\lambda\in \mathcal N(d)} \{ \tfrac{F(p,q)}{\Delta^2}:\  p,q\leq X,\ q\equiv \lambda p  \pmod {d\Delta}\},
\]
and then
\[
\sum_{\substack {\text{Primes }  p,q\leq X  \\  q\equiv \ell^{(g,h,r,s)} p  \pmod \Delta \\  ( \tfrac{F(p,q)}{\Delta^2} ,Q(Y))=1 }} 1 
=\mathcal S(\mathcal A_{(g,h,r,s)}, Y ) 
\]
with the usual sieve theory notation.

 \section{Sieve theory} 

To use sieve theory we will need to estimate the size of the sets $\# \mathcal A$ and $\# \mathcal A_d$:
\begin{align*}
\# \mathcal A &=\sum_{p,q\leq X} \frac 1{\phi(\Delta)} \sum_{\chi \pmod \Delta} \chi(\ell p\overline{q})
= \frac 1{\phi(\Delta)} \sum_{\chi \pmod \Delta} \chi(\ell)\sum_{p\leq X} \chi(p) \cdot \sum_{q\leq X} \chi(\overline{q}) \\ 
& = \frac 1{\phi(\Delta)} \sum_{\chi \pmod \Delta} \chi(\ell)|\pi(X,\chi)|^2
= \frac 1{\phi(\Delta)}    \sum_{k| \Delta} \sum_{\substack{\chi \pmod k \\ \chi \text{ primitive}}} \chi(\ell)|\pi(X,\chi)|^2+O(X),
\end{align*}
since  if $\chi'$ is induced by the primitive characters $\chi$ then
$|\pi(X,\chi)-\pi(X,\chi')|\leq \sum_{p|\Delta} 1\ll \log X$ and  $|\pi(X,\chi)|^2 -|\pi(X,\chi')|^2\ll X$;
and similarly
\begin{align*}
\# \mathcal A_d&=\sum_{\ell\in \mathcal N(d)} \bigg( \frac {1}{\phi(d\Delta)}    \sum_{k| d\Delta} \sum_{\substack{\chi \pmod k \\ \chi \text{ primitive}}} \chi(\ell)|\pi(X,\chi)|^2+O(X) \bigg)\\
&=\frac {\nu(d)\phi(\Delta)}{\phi(d\Delta)}  \# \mathcal A + O(\text{Error}(d))
\end{align*} 
where
\[
\text{Error}(d):=  \frac {\nu(d)}{\phi(d\Delta)}  \sum_{\substack{k| d\Delta\\ k\nmid \Delta}} \sum_{\substack{\chi \pmod k \\ \chi \text{ primitive}}} |\pi(X,\chi)|^2+O(\nu(d) X).
\]
The \emph{fundamental lemma of sieve theory} then gives that     
\[
\mathcal S(\mathcal A, Y ) = \prod_{p\leq Y} \bigg( 1 - \frac {\nu(p)\phi(\Delta)}{\phi(p\Delta)} \bigg) \# \mathcal A \{ 1+O(v^{-v})\}+
O\bigg( \sum_{1<d\leq Y^v} |\text{Error}(d)| \bigg) .
\]
We also write $ \# \mathcal A = \pi(X)^2/\phi(\Delta) +  \text{Error(1)}$ and so
\[
\mathcal S(\mathcal A, Y ) = \prod_{p\leq Y} \bigg( 1 - \frac {\nu(p)\phi(\Delta)}{\phi(p\Delta)} \bigg) \frac{ \pi(X)^2}{\phi(\Delta)} \{ 1+O(v^{-v})\} + 
 \text{Total Error}
\]
 since $\nu(d)\leq r_0(d)$,
\begin{align*}
\text{Total Error} & \ll    \sum_{d\leq D:=Y^v}\bigg(  \frac {\nu(d)}{\phi(d\Delta)}  \sum_{\substack{k| d\Delta\\ k\nmid \Delta \text{ if } d>1 \\ k>1  \text{ if } d=1}} \sum_{\substack{\chi \pmod k \\ \chi \text{ primitive}}} |\pi(X,\chi)|^2+  r_0(d) X \bigg) \\ 
 & \ll  \sum_{\substack{1<k\leq D\Delta}} \sum_{\substack{\chi \pmod k \\ \chi \text{ primitive}}} |\pi(X,\chi)|^2
\bigg( \frac{1_{k|\Delta}}{ \phi(\Delta)}  + 1_{k\nmid \Delta} \sum_{\substack{ 1<d\leq D\\ k| d\Delta  }}  \frac {\nu(d)}{\phi(d\Delta)}  \bigg)+    XD .
\end{align*} 
Now $k| d\Delta$ means $d=e \frac k{(k,\Delta)}$ for some $e\geq 1$, and so the sum is
\[
\leq \sum_{e\leq D}  \frac {\nu( \frac k{(k,\Delta)}\cdot e)}{\phi([\Delta,k]\cdot e)} \leq \frac {\nu( \frac k{(k,\Delta)} )}{\phi([\Delta,k] )}   \sum_{e\leq D} \frac {\nu(e)}{\phi(e)} \ll \frac {\nu( \frac k{(k,\Delta)} )}{\phi([\Delta,k] )} \log Y.
\]
and so, since $\nu(.)\leq r_0(.)$,
\[
\text{Total Error}\ll  
(\log Y)\cdot  \sum_{\substack{1<k\leq D\Delta}}  \frac {r_0( \frac k{(k,\Delta)} )}{\phi([\Delta,k] )}  \sum_{\substack{\chi \pmod k \\ \chi \text{ primitive}}} |\pi(X,\chi)|^2 +    XD .
\]

Since $\Delta$ is even, $\phi(\Delta)\phi(p)\leq \phi(p\Delta)$ and $\nu(.)\leq r_0(.)$
 by Lemma \ref{lem: sortv(.)}, we have
\begin{align*}
\frac{1}{\phi(\Delta)}\prod_{p\leq Y} \bigg( 1 - \frac {\nu(p)\phi(\Delta)}{\phi(p\Delta)} \bigg) &\geq \frac{1}{2 \Delta} \prod_{2<p\leq Y} \bigg( 1 - \frac{\nu(p)}{\phi(p)} \bigg) \\
&\geq \frac{1}{2 \Delta} \prod_{2<p\leq Y} \bigg(1- \frac{1+(\frac{-1}{p})}{p-1}\bigg) \gg \frac{1}{\Delta \log Y}
\end{align*}
by Mertens' Theorem.
 We will assume that $v$ is large enough that $1+O(v^{-v})$ is close to $1$, and write $y=Y^v$, so that the results in this section
together with   the prime number theorem, imply that
\begin{equation} \label{eq: sieve}
 \sum_{\substack { a,b\leq \sqrt{x}/6RG \\   (a^2+b^2,Q(Y))=1 \\ p,q\text{ both prime}}} 1\gg 
   \frac{X^2  L^{O(1)}}{\Delta (\log X)^3}  + 
 O\bigg( (\log Y)\cdot  \sum_{\substack{1<k\leq D\Delta}}  \frac {r_0( \frac k{(k,\Delta)} )}{\phi([\Delta,k] )}  \sum_{\substack{\chi \pmod k \\ \chi \text{ primitive}}} |\pi(X,\chi)|^2 +    XD\bigg)
 \end{equation}
as $ Y = x^{1/u} \ge x^{1/O(\log \log \log x)} $.
  
\subsection{The sum  of the error terms} 
When we insert this into \eqref{eq: setup} we will sum this formula over many quadruples $g,h,r,s$ for which 
$g^2+h^2, r^2+s^2\ll x^{2\eta}$ for a given $X$-value. The error term in this sum only depends of $\Delta$, and the number of choices of
$g,h,r,s$ for which  $\Delta = (r^2+s^2)2gh$, when $\Delta$ is odd and squarefree, equals 

\[
	\sum_{2gha = \Delta} r_0(a) \le \tau(\Delta) d_3 (\Delta)
\]
and so the error term for that sum is 
\begin{align*}
	&\ll (\log Y) \sum_{2 \mid \Delta \ll x^{4 \eta}} \mu^2(\Delta) \tau(\Delta) d_3(\Delta) \sum_{1 < k \le D \Delta} \frac{r_0(\frac{k}{(k,\Delta)})}{\phi([\Delta,k])} \sum_{\substack{\chi \pmod k \\ \chi \text{ primitive}}} |\pi(X,\chi)|^2 + XD \sum_{2 \mid \Delta \ll x^{4 \eta}} \tau(\Delta) d_3(\Delta)\\
	&\ll (\log Y) \sum_{1 < k \le Dx^{4 \eta}} \sum_{\substack{\chi \pmod k \\ \chi \text{ primitive}}} |\pi(X,\chi)|^2 \sum_{2 \mid \Delta \ll x^{4 \eta}} \mu^2(\Delta) d_3(\Delta) \frac{\tau([\Delta,k])}{\phi([\Delta,k])} + X D x^{4 \eta} (\log x)^5
\end{align*}
and then
\begin{align*}
	&\sum_{\Delta \ll x^{4 \eta}} \mu^2(\Delta) d_3(\Delta) \frac{\tau([\Delta,k])}{\phi([\Delta,k])}  = \frac{\tau(k)}{\phi(k)} \sum_{\Delta \ll x^{4 \eta}} \mu^2(\Delta) d_3(\Delta)  \frac{\tau(\frac{\Delta}{[\Delta,k]})}{\phi(\frac{\Delta}{[\Delta,k]})} \\
	&\ll \frac{\tau(k)}{\phi(k)} \prod_{p \mid k} (1+d_3(p)) \prod_{\substack{ p \nmid k \\ p \ll x^{4 \eta}}} \biggl( 1+ \frac{6}{p}\biggr) \ll \frac{\tau(k) d_4(k)}{\phi(k)} (\log x)^6.
\end{align*}
Therefore the sum of the error terms is

\[
	\ll (\log x)^6 \sum_{1 < k \le X^{1/2-\epsilon}} \frac{\tau(k) d_4(k)}{\phi(k)}  \sum_{\substack{\chi \pmod k \\ \chi \text{ primitive}}} |\pi(X,\chi)|^2.
\]

Now
 \[
\frac {1}{\phi(k)}   \sum_{\substack{\chi \pmod k \\ \chi\ne\chi_0}}  |\pi(X,\chi)|^2 =  \sum_{(a,k)=1} \bigg| \pi(X;k,a) -\frac{\pi(X)}{\phi(k)}\bigg|^2 .
 \]
Since $\pi(X;m,a)\ll \frac{\pi(X)}{\phi(m)}$ by the Brun-Titchmarsh theorem if $m<X^{1-c}$ the last displayed line is 
\[
  \ll    \frac X{\log X} \max_{(a,k)=1}   \bigg| \pi(X;k,a) -\frac{\pi(X)}{\phi(k)}\bigg| ,
\]
and so  the sum of the error terms is
\[
\ll X(\log X)^{5}  \sum_{ 1<k\leq   X^{1/2-\epsilon}}   \tau(k)d_4(k)   \max_{(a,k)=1}   \bigg| \pi(X;k,a) -\frac{\pi(X)}{\phi(k)}\bigg| .
 \]
The square of the sum  is, by Cauchying, 
\[
 \leq  \biggl(\sum_{ 1<k\leq   X^{1/2-\epsilon}}  \frac{ \tau(k)^2d_4(k)^2}{\phi(k)} \biggr) \biggl(  \sum_{ 1<k\leq   X^{1/2-\epsilon}}\phi(k)\max_{(a,k)=1}   \bigg| \pi(X;k,a) -\frac{\pi(X)}{\phi(k)}\bigg|^2 \biggr)
 \]  
 \[
 \ll  (\log X)^{64}\cdot \frac X{\log X}  \sum_{ 1<k\leq   X^{1/2-\epsilon}} \max_{(a,k)=1}   \bigg| \pi(X;k,a) -\frac{\pi(X)}{\phi(k)}\bigg| \ll \frac{X^2}{(\log X)^{2A+10}}
 \]  
  and so  the sum of the error terms is
\[
\ll \frac{X^2}{(\log X)^{A}} + XD  x^{4 \eta} (\log x)^{ 5 }\ll \frac{X^2}{(\log X)^{A}}.
 \]

\subsection{The main terms} Since we are aiming for a lower bound we can select a convenient subset of all the possibilities for $g,h,r,s$.
We take  $X= \sqrt{x}/32$ and  $\arg(g+ih)\in [\frac \pi 8, \frac {3\pi} 8]$ so that $gh\asymp G^2$ and $\Delta\asymp G^2R^2$.
 Therefore, as $\log X\asymp \log x$, we have
 \[
 \sum_{\substack{n\leq x \\ \omega(n)=k} } \binom{r_1(n)}2 \gg 2^{-u}  \frac{x}{  (\log x)^3} \sum_{R,G}   \sum_{\ell+\ell'\in [k-u,k]}  \frac{ 1}{G^2}   \sum_{\substack{(g,h)\in \mathcal S_{1,\ell} ([\frac \pi 8, \frac {3\pi} 8],G) \\ n:=2gh(g^2+h^2)}}  
\frac{ 1}{R^2} \# \mathcal S_{n,\ell'} ([0,\frac \pi 2],R) 
 \]
 \[ 
 +O\bigg(\frac{x}{(\log x)^{A}} \bigg) ,
  \]
  and the main term here is, as $k\ll \log\log x$ and $u\ll \log\log\log x$,
  \begin{align*}
  &\gg L^{O(1)}  \frac{x}{  (\log x)^3} \sum_{R,G}   \sum_{\ell+\ell'\in [k-u,k]}  \frac{ 1}{G^2} \# \mathcal S_{1,\ell} ([\frac \pi 8, \frac {3\pi} 8],G) 
\frac {1}{\log R} \cdot \frac {(\log\log R)^{\ell'-1}}{(\ell'-1)!} \cdot L^{O(1)} \\
&\gg    \frac{x}{  (\log x)^5} \sum_{R,G}   \sum_{\ell+\ell'\in [k-u,k]} 
\frac {(\log\log x)^{\ell-1}}{(\ell-1)!} \cdot 
  \frac {(\log\log x)^{\ell'-1}}{(\ell'-1)!}   \cdot L^{O(1)}  
\end{align*}
  by two applications of Proposition \ref{prop: keysums} and as $\log R,\ \log G \asymp \log x$,
\[
\gg    \frac{x}{  (\log x)^3}    \sum_{n\in [k-u-2,k-2]} 
\frac {(2\log\log x)^n}{n!}    \cdot L^{O(1)}  =  \frac{x}{  (\log x)^3}  
\frac {(2\log\log x)^k}{k!}    \cdot L^{O(1)}
\]
as there are $\asymp \log x$ intervals for both $G$ and $R$, and using the binomial theorem, and then assuming that 
$k\asymp \log\log x$, which  completes the proof of Theorem   \ref{thm: r1(n)choose 2}. \qed

 \section{Basic heuristic for the conjecture}
   The basic heuristic claims that ``typically'', 1 in $\log \sqrt{x}$, of the representations of $n=a^2+b^2$ with $(a,b)=1$ have $b$ a prime.

 Therefore if $r_0^*(n)=R$ then
 \begin{align*}
  \text{Prob} (r_1^*(n)=r) &= \binom{R}r \bigg( \frac 1{\log \sqrt{x}} \bigg)^r \bigg( 1- \frac 1{\log \sqrt{x}} \bigg)^{R-r} \\
  & =
 \frac{R^r}{r!} \prod_{j=1}^{r-1} \bigg( 1 - \frac jR\bigg) \cdot \bigg( \frac 2{\log x-2} \bigg)^r \cdot \exp\bigg( R \log \bigg(1-\frac 2{\log x}\bigg)  \bigg)\\
  & =
 \frac{1}{r!}    \bigg( \frac {2R}{\log x} \bigg)^r \cdot \exp\bigg( -\frac {2R}{\log x} +O  \bigg(\frac {R}{(\log x)^2} +\frac{r^2}R+\frac r{\log x}\bigg)  \bigg) .
 \end{align*}
 There is a remarkable change in behaviour here when $R$ is close to $\frac 12 \log x$; indeed if $R=\alpha\cdot \frac 12 \log x$ and $r$ is fixed then the above
 \[
 e^{-\alpha} \frac{\alpha^r}{r!}   \bigg( 1 +O  \bigg( \frac 1{\log x}\bigg)  \bigg) .
 \]
  
Using \eqref{eq: pi_N estimate} we expect that 
\begin{align*} 
\pi_\mathcal N(x;k,r):&= \# \{ n\leq x: \omega^*(n)=k \text{ and } r_1(n)=r\} \\
&\asymp    \frac x{\log x} \frac{ (\tfrac 12\log\log x)^{k-1}} {(k-1)!}  \cdot \frac{2^{(k+1)r}}{r!}   \frac 1{(\log x)^r}  \exp\bigg(  - \frac {2^{k+1}}{\log x} \bigg) \\
& = \frac{2^{2r}}{r!}    \frac x{(\log x)^{r+1}} \cdot T_k \text{ where } T_k:= \frac{ (2^{r-1} \log\log x)^{k-1}} {(k-1)!}  \exp\bigg(  - \frac {2^{k+1}}{\log x} \bigg) 
 \end{align*} 
and we now  wish to sum over $k$.   If $r\geq 2$ then the $ \frac{ (2^{r-1} \log\log x)^{k-1}} {(k-1)!}$ is increasing whenever   $2^k\leq \log x$, by a factor $>2\log 2>1$ at each consecutive $k$; and the $\exp(- \frac {2^{k+1}}{\log x})$ term can be ignored in this range (as it is $\asymp 1$), but for larger $k$ it decreases rapidly and so the maximum here occurs when $2^k\asymp \log x$; that is, $k=K+m$ with $m=O(1)$ (and so we can take $\kappa=\lambda$ in  \eqref{eq: pi_N estimate}). Thus  $\alpha_k =2^m\alpha_K$ (where $\alpha_k=2^{k+1}/\log x$) and so
\[
\pi_\mathcal N(x;k)\sim c_\lambda \frac x{\log x} \frac{ (L/2)^{k-1}} {(k-1)!} \sim 
 \kappa \frac x{(\log x)^{1+\delta}}    \frac { \alpha_k^{-1-\tau}}{\sqrt{ \log\log x }  } ,
\]
where   $\kappa=c_\lambda \sqrt{ (2\lambda)^3/ \pi} $ (and  $\kappa \approx 0.29356\dots$, which was obtained by taking all primes $p<10^7$ in the Euler product)
since, by Stirling's formula
\[
\frac{ (L/2)^k} {k!} \sim \frac {(eL/2k)^k}{\sqrt{2\pi k}} \frac{2^{\delta L/\log 2}} {(\log x)^{\delta} }\sim
\frac {(e\cdot 2^\delta)^{\lambda L  -k}}{\sqrt{2\pi k} (\log x)^{\delta} } =
\frac {2\lambda\cdot \alpha_k^{-1-\tau}}{\sqrt{2\pi k} (\log x)^{\delta} }.
\]
Therefore we expect that
 \[
\pi_\mathcal N(x;K+m,r)\sim     \frac {\kappa   x}{(\log x)^{1+\delta}  \sqrt{\log\log x}}   \cdot \frac{e^{- 2^m\alpha_K} (2^m\alpha_K)^{ r-1-\tau}}{r!}
 \]
 and so we guess that 
 \[
 N_r(x)=\sum_k \pi_\mathcal N(x;k,r) \sim    \psi_r(\tfrac{\log\log x}{\log 2}) \cdot  \frac { x}{(\log x)^{1+\delta}  \sqrt{\log\log x}} ,
 \]
 where $\psi_r(t)$ is a continuous (bounded) function of period $1$, and for $t\in [0,1)$ we have, for $\beta=2^{1-t}$,
 \[
 \psi_r(t) =   \frac{\kappa }{r!} \sum_{m\in \mathbb Z}   (2^{m}\beta)^{ r-1-\tau}    e^{-2^m\beta} .
 \]
 It is evident that this is invariant under the map $\beta\to 2\beta$ which is equivalent to  $t\to t-1$; that is, this definition is
 $1$-periodic.

\subsection*{$N_r(x)$ vs $N_{r+1}(x)$} Is it true that $N_r(x)>N_{r+1}(x)$ if $x$ is large? With our heuristic above we expect that 
$N_r(x)>N_{r+1}(x)$ for all large $x$ if and only if $\psi_r(t)>\psi_{r+1}(t)$ for all $t\in [0,1)$ if and only if
 \begin{equation} \label{eq: selfsimilar}
 f_{ r-1-\tau}  (\beta)   \geq    \frac{1}{r+1} f_{ r-\tau}  (\beta)   
  \text{ where } f_R(\beta):=\sum_{m\in \mathbb Z}   (2^{m}\beta)^{R}    e^{-2^m\beta}  
 \end{equation}
for all $\beta\in (1,2]$.  The sum $f_R(\beta)$ is invariant under multiplication by $2$ so we extend the definition all $\beta>0$.The largest term in this sum comes when $2^m\beta\in [R\log 2, R\log 4]$ so, wlog
$\beta=\lambda R, \lambda \in (\log 2, \log 4]$.  Therefore the sum becomes 
\[
f_R(\beta):=\sum_{m\in \mathbb Z}   (2^{m}\lambda R)^{R}    e^{-2^m\lambda R} 
= \beta^{R}    e^{-\beta} \bigg( 1+ O \bigg(   e^{-(\log 2-\frac 12\lambda)R}   +e^{(\log 2- \lambda)R}    \bigg) ,
\]
and one can use this to show that 
\[
f_R(R\log 2)\sim 2^{1-\delta R} \cdot R^{R}    e^{-R}  \leq f_R(\beta)\leq f_R(R) \sim R^{R}    e^{-R}
\]
 (and so $f_{ r-1-\tau}  (\beta)/r! \lesssim 1/(2\pi)^{1/2} r^{\frac 32+\tau}$). Substituting  this estimate into 
 \eqref{eq: selfsimilar} with $\beta=\lambda(r-\tau)$ and we more-or-less obtain
 \[
  \lambda(r-\tau)\leq   r+1; \text{ that is } \lambda \leq 1+O(1/r).
 \]
 Therefore if say $\beta=\frac 76(r-\tau)$ then we expect that if $r$ is sufficiently large then we can determine values of $x$ for which  $N_r(x)<N_{r+1}(x)$ if $x$ is large. Indeed calculations reveal that if $r\geq 22$ then
 $f_{ r-1-\tau}  (\frac 76(r-\tau))   \geq    \frac{1}{r+1} f_{ r-\tau}  (\frac 76(r-\tau))$; that is, if 
for a sufficiently large integer $m$, 
 \[
 x=\exp\bigg( \frac {6\cdot 2^m}{7   (r-\tau)} \bigg)  \text{ then } N_r(x)<N_{r+1}(x) ,
 \]
 where for all $r\geq 2$ if 
  \[
 x=\exp\bigg( \frac {6\cdot 2^m}{5   (r-\tau)} \bigg)  \text{ then } N_r(x)>N_{r+1}(x).
 \]
Further calculations suggest that if $2\leq r\leq 21$ then $N_r(x)>N_{r+1}(x)$ for all sufficiently large $x$ (but not for $r=22$).

Now
\[
\int_{\beta=1}^2 f_R(\beta)\frac{d\beta}{\beta}
=\sum_{m\in \mathbb Z}  \int_{\beta=1}^2 (2^{m}\beta)^{R}    e^{-2^m\beta} \frac{d(2^{m}\beta)}{2^{m}\beta}
=\sum_{m\in \mathbb Z}  \int_{t=2^m}^{2^{m+1}} t^{R-1}    e^{-t}  dt = \int_{t=0}^\infty t^{R-1}    e^{-t}  dt =\Gamma(R),
\]
so that $\frac 1{r!}f_{r-1-\tau}(\beta)$ is, on average, $\frac{\Gamma(r-1-\tau)}{r!\log 2}\sim \frac 1{r^{2+\tau}\log 2}$, and we expect that for large $X$,
 \[
  \int_{x=X}^{X^2}  \frac{ N_r(x)   } { x/(\log x)^{\delta}} \frac{dx}x \sim   \frac{\kappa\, \Gamma( r-1-\tau)/r!}  {\sqrt{\log\log X} } .
 \]

     \bibliographystyle{amsplain}

 \end{document}